\newtheorem{assumption}{Assumption}
\newtheorem{theorem}{Theorem}
\newtheorem{remark}{Remark}
\newtheorem{lemma}{Lemma}
\crefname{assumption}{assumption}{assumptions}
\newtheorem{corollary}{Corollary}
\newcommand{\cmark}{\textcolor{green!80!black}{\ding{51}}}
\newcommand{\xmark}{\textcolor{red}{\ding{55}}}
\definecolor{bgcolor}{rgb}{0.8,1,1}
\definecolor{bgcolor2}{rgb}{0.8,1,0.8}
\title{Optimal Algorithm with Complexity Separation for Strongly Convex-Strongly Concave Composite Saddle Point Problems}
\author{
  Ekaterina Borodich 
  \\MIPT\\ \texttt{borodich.ed@phystech.edu} \\
  \And
   Georgiy Kormakov \\
   Skoltech \\
   \texttt{ Georgiy.Kormakov@skoltech.ru} \\
  \AND
   Dmitry Kovalev \\
   UCL \\
   \texttt{dakovalev1@gmail.com} \\
   \And
   Aleksandr Beznosikov  \\
   MIPT \\
   \texttt{beznosikov.an@phystech.edu} \\
   \And
   Alexander Gasnikov \\
   MIPT \\
   \texttt{gasnikov@yandex.ru} \\
}
\begin{document}

\maketitle
\begin{abstract}
 In this work, we focuses on the following saddle point problem $\min_x \max_y p(x) + R(x,y) - q(y)$ where $R(x,y)$ is $L_R$-smooth, $\mu_x$-strongly convex, $\mu_y$-strongly concave and $p(x), q(y)$ are convex and $L_p, L_q$-smooth respectively. We present a new algorithm with optimal overall complexity $\mathcal{O}\left(\left(\sqrt{\frac{L_p}{\mu_x}} + \frac{L_R}{\sqrt{\mu_x \mu_y}} + \sqrt{\frac{L_q}{\mu_y}}\right)\log \frac{1}{\varepsilon}\right)$ and separation of oracle calls in the composite and saddle part. This algorithm requires $\mathcal{O}\left(\left(\sqrt{\frac{L_p}{\mu_x}} + \sqrt{\frac{L_q}{\mu_y}}\right) \log \frac{1}{\varepsilon}\right)$ oracle calls for $\nabla p(x)$ and $\nabla q(y)$ and $\mathcal{O} \left( \max\left\{\sqrt{\frac{L_p}{\mu_x}}, \sqrt{\frac{L_q}{\mu_y}}, \frac{L_R}{\sqrt{\mu_x \mu_y}} \right\}\log \frac{1}{\varepsilon}\right)$ oracle calls for $\nabla R(x,y)$ to find an $\varepsilon$-solution of the problem. To the best of our knowledge, we are the first to develop optimal algorithm with complexity separation in the case $\mu_x \not = \mu_y$. Also, we apply this algorithm to a bilinear saddle point problem and obtain the optimal complexity for this class of problems. 
\end{abstract}  

\section{Introduction}

In this work, we consider strongly convex and strongly concave saddle point problems (SPPs) with the composite structure:
\begin{equation}\label{problem:main}
    \min_{x \in \mathbb{R}^{d_x}}\max_{y \in \mathbb{R}^{d_y}}p(x) + R(x,y) - q(y),
\end{equation}
where $p(x): \mathbb{R}^{d_x} \to \mathbb{R}$, $q(y): \mathbb{R}^{d_y} \to \mathbb{R}$ are convex and $L_p, L_q$-smooth function respectively and $R(x, y): \mathbb{R}^{d_x} \times \mathbb{R}^{d_y} \to \mathbb{R}$ is $L_R$-smooth, $\mu_x$-strongly convex and $\mu_y$-strongly concave. Both composites $p(x)$, $q(y)$ are not necessarily proximal friendly. Note, that we can also consider $R(x,y)$ to be convex-concave and $p(x)$ is $\mu_x$-strongly convex and $q(y)$ is $\mu_y$-strongly concave. By the transformation $p(x)\to p(x)-\frac{\mu_x}{2}\|x\|^2$, $q(y)\to q(y)+\frac{\mu_y}{2}\|y\|^2$ and $R(x,y)\to R(x,y)+\frac{\mu_x}{2}\|x\|^2-\frac{\mu_y}{2}\|y\|^2$ we can reduce this case to the problem \eqref{problem:main}. 

The lower bounds of iteration complexity for the problem \eqref{problem:main} $ \Omega\left(\left(\sqrt{\frac{L_p}{\mu_x}} + \frac{L_R}{\sqrt{\mu_x \mu_y}} + \sqrt{\frac{L_q}{\mu_y}} \right) \log \frac{1}{\varepsilon}\right)$ was proposed in \cite{zhang2019lower}. In this work we present algorithm that achieve these lower bounds. But the focus of this work is on the composites complexity separation which is a key issue in many applications. Below, we give some prime examples of this. 

\textbf{Distributed optimization.} One of the classic application of the problem \eqref{problem:main} is a decentralized 
distributed optimization over communication network $\mathcal{G} = (\mathcal{V}, \mathcal{E})$ :  \begin{equation}\label{pr:ex_distributed}
    \min_{W\mathbf{x} = 0}\sum_{i=1}^n f_i(x_i),
\end{equation}  
where $\mathbf{x} = [x_1^T, \dots, x_n^T]^T$, $n = |\mathcal{V}|$ is the number of nodes (clients) in $\mathcal{G}$, $\{f_i(x_i)\}_{i=1}^n$ are functions that store on nodes with variables $x_i$.  Also client $i$ can communicate with client $j$ if and only if there is edge in graph $\mathcal{G}$, i.e. $(i, j) \in \mathcal{E}$ and $W$ is a gossip matrix for communication network $\mathcal{G}$ which responsible for communications between nodes. In particular, the Laplacian matrix of $\mathcal{G}$ can be used as $W$. Note, that to solve problem in this formulation we have to use conditional optimization methods. To move to unconditional optimization we use a penalty function $\psi(x)$: 
\begin{align}\label{pr:ex_distributed_1}             
\min_{\mathbf{x} \in \mathbb{R}^{nd}} F(\mathbf{x}) + \psi\left(W\mathbf{x}\right),
\end{align}
where $F(\mathbf{x}) = \sum_{i=1}^n f_i(x_i)$ is $\mu_F$-strongly convex, $L_F$-smooth function and $\psi(\mathbf{y}) = 0$ if $\mathbf{y} = 0$, in otherwise $\psi(\mathbf{y}) = +\infty$. In many practical examples, solving the dual problem to \eqref{pr:ex_distributed_1} preferably. This problem has the following form: 
\begin{align}\label{pr:distributed_dual}              
\min_{\mathbf{x} \in \mathbb{R}^{nd}}\max_{\mathbf{y} \in \mathbb{R}^{nd}} F(\mathbf{x}) 
 + \langle \mathbf{y}, W\mathbf{x} \rangle
\end{align}
due to $\psi(\mathbf{y})$ is indicator function and $\psi^*(\mathbf{y}) = 0$. Meanwhile,  to solve  problem \eqref{pr:distributed_dual} we need to find gradient $\nabla F(x)$, i.e. compute local gradients $\nabla f_i(x_i)$. Also, we need to compute $W\mathbf{x}$, $W\mathbf{y}$ (gradients of saddle part $ \langle \mathbf{y}, W\mathbf{x}  \rangle$), i.e. make the communication round. The usually goal for this problem is reduce the communication rounds \cite{stefano_2020_fed}, \cite{Brown2020LanguageMA}. It means, that separation of the oracle complexities to composite and saddle part in problem \eqref{pr:distributed_dual} is significant problem.  

\textbf{Personalized federated learning.} The other important example of the problem \eqref{problem:main} is personalized federated saddle point problem \cite{smith2017federated,wang2018distributed,li2020decentralized,gorbunov2019optimal}
\begin{equation}\label{problem:pfl}
\min_{x \in \mathbb{R}^{d_x}} \max_{y \in \mathbb{R}^{d_y}} \frac{\lambda}{2}\left\|\sqrt{W}X\right\|^2 + \sum_{m=1}^M f_m(x_m, y_m) -\frac{\lambda}{2}\left\|\sqrt{W}Y\right\|^2,
\end{equation}
where $x_1, \ldots, x_M$ and $y_1,\ldots,y_M$ are interpreted as local models on nodes which are grouped into matrices $X := [x_1, \ldots, x_M]^T$ and $Y := [y_1, \ldots, y_M]^T$. $\lambda > 0$ is the key regularization parameter, which corresponds to the personalising degree of the models and $W$ is the gossip matrix reflecting the properties of the communication graph between the nodes. As mentioned above, the composite gradient oracles $WX, WY$ are responsible for the communications. Since, we are interested in separating and reducing gradient calls of composites.

\textbf{Emperical Risk Minimization.} The other important practical case of the problem \eqref{problem:main} is Emperical Risk Minimization problem. This example comes from machine learning \cite{ShalevShwartz2014UnderstandingML}. This problem has the following form
\begin{equation}\label{problem:emperical}
    \min_{x \in \mathbb{R}^{d_x}} p(x) + q(Bx)
\end{equation}
where $q(x)$ is convex loss function, $B$ is matrix with data features and $p(x)$ is strongly convex regularizer. This problem is equal to the following saddle point problem
\begin{equation}\label{problem:emperical_dual}
    \min_{x \in \mathbb{R}^{d_x}}\max_{y \in \mathbb{R}^{d_y}} p(x) + x^T B y - q^*(y),
\end{equation}
 which can be preferable than problem \eqref{problem:emperical} in many practical applications. For example, in distributed optimization to reduce the communication complexity \cite{JMLR:v20:17-608}. Moreover, the gradients $\nabla p(x)$ or $\nabla q^*(y)$ can be difficult to calculate. In this case, we are interested in separating oracle calls.

These practical examples illustrate the importance of separating oracle complexities which lead to the following research question for the problem \eqref{problem:main}
\begin{equation*}
    \textit{Can we effectively separate oracle complexities for composites and saddle parts?}
\end{equation*}

\begin{minipage}{\textwidth}
\begin{table}[H]
    \centering
    \small
    \label{tab:comparison}
\resizebox{\textwidth}{!}{
\begin{threeparttable}
    \begin{tabular}{|c|c|c|c|c|c|}
    \cline{2-5}
    \multicolumn{1}{c|}{}
      &\textbf{ Reference} & \textbf{Oracle calls of $\nabla p(x), \nabla q(y)$} & \textbf{Oracle calls of $\nabla R(x,y)$ or $B, B^T$}& \textbf{Compl. Sep.} \\
      \cline{2-5}
     \multicolumn{1}{c|}{} &\multicolumn{4}{c|}{\textbf{Strongly convex-strongly concave case}}
    \\\hline
      \multirow{10}{*}{\rotatebox[origin=c]{90}{\textbf{Upper} \quad \quad \quad}} &  Korpelevich, 1974 \cite{Korpelevich1976TheEM} & \multicolumn{2}{c|}{} &
      \\ 
      & Tseng, 2000 \cite{tseng2000} & \multicolumn{2}{c|}{} &
      \\
      & Nesterov and Scimali, 2006 \cite{nesterov2006scrimali}
      & \multicolumn{2}{c|}{} &
      \\
      & Gidel et al., 2018 \cite{gidel2018variational}
     &\multicolumn{2}{c|}{\multirow{-4}{*}{ $\mathcal{O} \left( \left(\frac{L_R +L_p}{\mu_x} + \frac{L_R +L_q}{\mu_y} \right) \log \frac{1}{\varepsilon} \right)$}}  & \multirow{-4}{*} {\xmark}
    \\ \cline{2-5}  
    & Alkousa et al., 2019 \cite{alkousa2019}& $\nabla p(x)$ : $\mathcal{O} \left( \sqrt{\frac{L_p}{\mu_x}}\log \frac{1}{\varepsilon}\right)$, $\nabla q(y)$: $\mathcal{O} \left( \frac{L_R}{\sqrt{\mu_x \mu_y}} \sqrt{\frac{L_q}{\mu_y}}\log^3\frac{1}{\varepsilon}\right)$ &    $\mathcal{O} \left(  \frac{L_R\sqrt{L_R}}{\mu_y\sqrt{\mu_x}} \log^3 \frac{1}{\varepsilon} \right)$ & \cmark 
    \\\cline{2-5}
    & Lin et al., 2020 \cite{lin2020}  
     &\multicolumn{2}{c|}{$\mathcal{O} \left( \frac{L_R + \sqrt{L_pL_q}}{\sqrt{\mu_x \mu_y}} \log^3 \frac{1}{\varepsilon} \right)$}  & \xmark
     \\ \cline{2-5}
     & Wang and Li, 2020 \cite{wang2020}  
     &\multicolumn{2}{c|}{$\mathcal{O} \left( \max\left\{\sqrt{\frac{L_p}{\mu_x}}, \sqrt{\frac{L_q}{\mu_y}}, \sqrt{\frac{L_R L}{\mu_x \mu_y}}\right\} \log^3 \frac{(L_p + L_R)(L_q + L_R)}{\mu_x \mu_y} \log \frac{1}{\varepsilon} \right)$}  & \xmark
    \\ \cline{2-5}
     & Kovalev and Gasnikov, 2022 \cite{FOAM}  
     &\multicolumn{2}{c|}{$\mathcal{O} \left( \frac{L_R + \sqrt{L_pL_q}}{\sqrt{\mu_x \mu_y}} \log \frac{1}{\varepsilon} \right)$}  & \xmark
    \\ \cline{2-5}
     &  Jin et al., 2022  \cite{pmlr-v178-jin22b}
     &\multicolumn{2}{c|}{} & 
     \\&  Li et al., 2022 \cite{li2022stochastic}
     &\multicolumn{2}{c|}{\multirow{-2}{*}{$\mathcal{O} \left( \textcolor{black}{\max}\left\{\sqrt{\frac{L_p}{\mu_x}}, \sqrt{\frac{L_q}{\mu_y}}, \frac{L_R}{\mu_x}, \frac{L_R}{\mu_y}  \right\}\log \frac{1}{\varepsilon} \right)$\footnote{In these papers, results close to the lower bounds were obtained but a slightly different notation was used. For more details see \Cref{discussion:subsec:sc-sc}}}} & \multirow{-2}{*}{\xmark}
     \\ \cline{2-5}
    & \cellcolor{bgcolor} \textbf{This paper} & \cellcolor{bgcolor}$\mathcal{O} \left( \textcolor{black}{\max}\left\{\sqrt{\frac{L_p}{\mu_x}}, \sqrt{\frac{L_q}{\mu_y}}  \right\}\log \frac{1}{\varepsilon} \right)$ &  \cellcolor{bgcolor} { $\mathcal{O} \left( \textcolor{black}{\max}\left\{\sqrt{\frac{L_p}{\mu_x}}, \sqrt{\frac{L_q}{\mu_y}}, \frac{L_R}{\sqrt{\mu_x\mu_y}}  \right\}\log \frac{L_R}{\min\{\mu_x, \mu_y\}}\log \frac{1}{\varepsilon} \right)$} & \cellcolor{bgcolor} \cmark 
    \\ \cline{1-5}
     \multirow{2}{*}{\rotatebox[origin=c]{90}{\textbf{Lower}}} & Zhang et al., 2019 \cite{zhang2019lower} & -- &  $\Omega \left(  \frac{L_R}{\sqrt{\mu_x \mu_y}} \log \frac{1}{\varepsilon}\right)$& -- 
    \\ \cline{2-5}
     & Nesterov, 2004 \cite{nesterov2018lectures} & $\Omega \left( \max\left\{\sqrt{\frac{L_p}{\mu_x}}, \sqrt{\frac{L_q}{\mu_y}}\right\}\log \frac{1}{\varepsilon}\right)$& -- & -- 
    \\\hline
    \multicolumn{1}{c|}{}&\multicolumn{4}{c|}{\textbf{Convex-concave case}}
    \\\hline
    \multirow{8}{*}{\rotatebox[origin=c]{90}{\textbf{Upper}}}
    &   Korpelevich, 1974 \cite{Korpelevich1976TheEM} & \multicolumn{2}{c|}{} &
      \\
      & Tseng, 2000 \cite{tseng2000}
      & \multicolumn{2}{c|}{} &
      \\
      & Monteiro and Svaiter, 2010 \cite{monteiro_svaiter2010} 
     &\multicolumn{2}{c|}{\multirow{-3}{*}{ $\mathcal{O} \left(\frac{L\mathcal{D}^2}{\varepsilon}  \right)$}}  & \multirow{-3}{*} {\xmark}
    \\ \cline{2-5} 
     & Chen et al., 2017 \cite{chen2017} & \multicolumn{2}{c|}{$\mathcal{O} \left( \sqrt{\frac{\max\{L_p ,L_q\}}{\varepsilon}}\mathcal{D} + \frac{L_R}{\varepsilon}\mathcal{D}^2\right)$} & \xmark
    \\ \cline{2-5}
    & Lan and Ouyang, 2021 \cite{lan2021} & $\mathcal{O} \left( \sqrt{\frac{\max\{L_p ,L_q\}}{\varepsilon}}\mathcal{D} \right)$ & $\mathcal{O} \left(  \max\left\{\sqrt{\frac{\max\{L_p,L_q\}}{\varepsilon}}\mathcal{D}, \frac{L_R}{\varepsilon}\mathcal{D}^2\right\}\right)$ & \cmark
    \\ \cline{2-5}
    & \cellcolor{bgcolor} \textbf{This paper} & {\cellcolor{bgcolor}$\mathcal{O}\left(\max\left\{\sqrt{\frac{L_p}{\varepsilon}}\mathcal{D}_x, \sqrt{\frac{L_q}{\varepsilon}}\mathcal{D}_y\right\}\log \frac{1}{\varepsilon}\right)$} &  \cellcolor{bgcolor} { $\mathcal{O}\left(\max\left\{\sqrt{\frac{L_p}{\varepsilon}}\mathcal{D}_x, \sqrt{\frac{L_q}{\varepsilon}}\mathcal{D}_y, \frac{L_R}{\varepsilon}\mathcal{D}_x \mathcal{D}_y\right\}\log^2 \frac{1}{\varepsilon}\right)$} & {\cellcolor{bgcolor} \cmark} 
    \\ \cline{1-5}
     \multirow{2}{*}{\rotatebox[origin=c]{90}{\textbf{Lower}}} & Zhang et al., 2019 \cite{zhang2019lower} & -- &  $\Omega \left(  \frac{L_R}{\varepsilon}\mathcal{D}_x \mathcal{D}_y \right)$& -- 
    \\ \cline{2-5}
     & Nesterov, 2004 \cite{nesterov2018lectures} &  $\Omega\left(\sqrt{\frac{L_p}{\varepsilon}}\mathcal{D}_x + \sqrt{\frac{L_q}{\varepsilon}}\mathcal{D}_y\right)$& -- & -- 
    \\\hline
    \multicolumn{1}{c|}{}&\multicolumn{4}{c|}{\textbf{Bilinear strongly convex-strongly concave case}}
    \\\hline
    \multirow{14}{*}{\rotatebox[origin=c]{90}{\textbf{Upper}}}
    &  Korpelevich, 1976 \cite{Korpelevich1976TheEM}
     &\multicolumn{2}{c|}{}  & 
    \\
    &  Nesterov and Scrimali, 2006 \cite{nesterov2006scrimali}
     &\multicolumn{2}{c|}{}  & 
    \\ 
    & Mokhtari et al., 2020 \cite{mokhtari2020}
     &\multicolumn{2}{c|}{\multirow{-3}{*}{$\mathcal{O} \left( \frac{L}{\min\{\mu_p, \mu_q\}}  \log \frac{1}{\varepsilon} \right)$}}  & \multirow{-3}{*}{\xmark}
    \\ \cline{2-5}
    &  Cohen et al., 2021 \cite{cohen2021}
     &\multicolumn{2}{c|}{$\mathcal{O} \left( \textcolor{black}{\max}\left\{\frac{L_p}{\mu_p}, \frac{L_q}{\mu_q}, \sqrt{\frac{\lambda_{\max}(BB^T)}{\mu_p\mu_q}}  \right\}\log \frac{1}{\varepsilon} \right)$}  & \xmark
    \\ \cline{2-5}
     &  Wang and Li, 2020 \cite{wang2020}
     &\multicolumn{2}{c|}{$\mathcal{O} \left( \textcolor{black}{\max}\left\{\sqrt{\frac{L_p}{\mu_p}}, \sqrt{\frac{L_q}{\mu_q}}, \sqrt{\frac{L\sqrt{\lambda_{\max}(BB^T)}}{\mu_p\mu_q}}  \right\}\log \frac{1}{\varepsilon} \right)$}  & \xmark
    \\ \cline{2-5}
    &  Xie et al., 2021 \cite{xie2021}
     &\multicolumn{2}{c|}{$\mathcal{O} \left( \textcolor{black}{\max}\left\{\sqrt[4]{\frac{L_p^2L_q}{\mu_p^2\mu_q}}, \sqrt[4]{\frac{L_q^2 L_p}{\mu_q^2 \mu_p}}, \sqrt{\frac{\lambda_{\max}(BB^T)}{\mu_p\mu_q}}  \right\}\log \frac{1}{\varepsilon} \right)$}  & \xmark
    \\ \cline{2-5}
    &  Kovalev et al., 2021 \cite{kovalev2022accelerated}
     &\multicolumn{2}{c|}{}  &
    \\
    &  Thekumparampil et al., 2022 \cite{pmlr-v151-thekumparampil22a}
     &\multicolumn{2}{c|}{} & 
    \\  
    &  Jin et al., 2022 \cite{Jin2020:mdp}
     &\multicolumn{2}{c|}{}  & 
     \\  
    &  Du et al., 2022 \cite{du2022}
     &\multicolumn{2}{c|}{}  &
     \\  
    &  Li et al., 2022 \cite{li2022stochastic}
     &\multicolumn{2}{c|}{\multirow{-5}{*}{$\mathcal{O} \left( \textcolor{black}{\max}\left\{\sqrt{\frac{L_p}{\mu_p}}, \sqrt{\frac{L_q}{\mu_q}}, \sqrt{\frac{\lambda_{\max}(BB^T)}{\mu_p\mu_q}}  \right\}\log \frac{1}{\varepsilon} \right)$}}  & \multirow{-5}{*}{\xmark}
    \\\cline{2-5}
    & \cellcolor{bgcolor} & \cellcolor{bgcolor}  &  \cellcolor{bgcolor} { $\mathcal{O} \left( \textcolor{black}{\min}\left\{K_1, K_2  \right\}\log \frac{\sqrt{\lambda_{\max}(BB^T)}}{\min\{\mu_p, \mu_q\}}\log \frac{1}{\varepsilon} \right)$} & \cellcolor{bgcolor}
    \\ & \cellcolor{bgcolor} & \cellcolor{bgcolor} & \cellcolor{bgcolor} $K_1 = \max \left\{\sqrt{\frac{L_p\lambda_{\max}(BB^T)}{\mu_p\lambda_{\min}(BB^T)}}, \sqrt{\frac{L_q\lambda_{\max}(BB^T)}{\mu_q\lambda_{\min}(BB^T)}}\right\}$& \cellcolor{bgcolor}
    \\ & \cellcolor{bgcolor} \multirow{-5}{*}{\textbf{This paper}} & \cellcolor{bgcolor}\multirow{-5}{*}{$\mathcal{O} \left( \max\left\{\sqrt{\frac{L_p}{\mu_p}}, \sqrt{\frac{L_q}{\mu_q}}\right\}\log \frac{1}{\varepsilon}\right)$} &\cellcolor{bgcolor} $K_2 = \max\left\{\sqrt{\frac{L_p}{\mu_p}}, \sqrt{\frac{L_q}{\mu_q}}, \sqrt{\frac{\lambda_{\max}(BB^T)}{\mu_p\mu_q}}  \right\}$ & \cellcolor{bgcolor} \multirow{-5}{*} \cmark
    \\ \cline{1-5}
     \multirow{2}{*}{\rotatebox[origin=c]{90}{\textbf{Lower}}} & Zhang et al., 2019 \cite{zhang2019lower} & -- & $\Omega \left(  \sqrt{\frac{\lambda_{\max}(BB^T)}{\mu_p\mu_q}} \log \frac{1}{\varepsilon}\right)$& -- 
    \\ \cline{2-5}
     & Nesterov, 2004 \cite{nesterov2018lectures}  & $\Omega \left( \max\left\{\sqrt{\frac{L_p}{\mu_p}}, \sqrt{\frac{L_q}{\mu_q}}\right\}\log \frac{1}{\varepsilon}\right)$& -- & -- 
    \\\hline
    \multicolumn{1}{c|}{}&\multicolumn{4}{c|}{\textbf{Affinely constrained minimization case}}
    \\\hline
    \multirow{4}{*}{\rotatebox[origin=c]{90}{\textbf{\ \ \  Upper \ \ \ }}}
    &  Kovalev et al., 2020 \cite{NEURIPS2020_d530d454}
     &\multicolumn{2}{c|}{}  & 

    \\
    &  Kovalev et al., 2021 \cite{kovalev2022accelerated}
    &\multicolumn{2}{c|}{\multirow{-2}{*}{$\mathcal{O} \left( \sqrt{\frac{L_p\lambda_{\max}(BB^T)}{\mu_p\lambda_{\min}(BB^T)}}\log \frac{1}{\varepsilon} \right)$}}  & \multirow{-2}{*}{\xmark}
    \\\cline{2-5}
    & \cellcolor{bgcolor} \textbf{This paper} & \cellcolor{bgcolor} $\mathcal{O} \left( \sqrt{\frac{L_p}{\mu_p}}\log \frac{1}{\varepsilon}\right)$ &  \cellcolor{bgcolor} { $\mathcal{O}\left(\min\left\{\sqrt{\frac{L_p\lambda_{\max}(BB^T)}{\mu_p\lambda_{\min}(BB^T)}}, 
    \max \left\{\sqrt{\frac{L_p}{\mu_p}}, \sqrt{\frac{\lambda_{\max}(BB^T)}{\mu_p\varepsilon}}\mathcal{D}_y\right\} \right\}\log^2\frac{1}{\varepsilon}\right)$}  & \cellcolor{bgcolor} \cmark  
    \\ \cline{1-5}
     \multirow{2}{*}{\rotatebox[origin=c]{90}{\textbf{Lower}}} & 
Salim et al., 2021 \cite{pmlr-v151-salim22a} & -- & $\Omega \left(  \sqrt{\frac{L_p\lambda_{\max}(BB^T)}{\mu_p\lambda_{\min}(BB^T)}}\log \frac{1}{\varepsilon} \right)$& -- 
    \\ \cline{2-5}
     & Nesterov, 2004 \cite{nesterov2018lectures}  & $\Omega \left( \sqrt{\frac{L_p}{\mu_p}}\log \frac{1}{\varepsilon}\right)$& -- & -- 
    \\\hline
        \multicolumn{1}{c|}{}&\multicolumn{4}{c|}{\textbf{Billinear case with linear composites}}
    \\\hline
    \multirow{2}{*}{\rotatebox[origin=c]{90}{\textbf{ Upper }}}
    &  Azizian et al., 2020 \cite{azizian_2020} 
    &\multicolumn{2}{c|}{$\mathcal{O} \left( \sqrt{\frac{\lambda_{\max}(BB^T)}{\lambda_{\min}(BB^T)}}\log \frac{1}{\varepsilon} \right)$}  & \xmark
    \\\cline{2-5}
    & \cellcolor{bgcolor} \textbf{This paper} & \cellcolor{bgcolor} {$\mathcal{O}\left (\log\frac{1}{\varepsilon}\right)$} &  \cellcolor{bgcolor} { $\mathcal{O}\left(\sqrt{\frac{\lambda_{\max}(BB^T)}{\lambda_{\min}(BB^T)}}\log^2\frac{1}{\varepsilon}\right)$}  & \cellcolor{bgcolor}{\cmark} 
    \\ \cline{1-5}
     \multirow{2}{*}{\rotatebox[origin=c]{90}{\textbf{Lower \ \ }}} & & & &
     \\& Ibrahim et al., 2020 \cite{pmlr-v119-ibrahim20a} & -- & $\Omega \left(  \sqrt{\frac{\lambda_{\max}(BB^T)}{\lambda_{\min}(BB^T)}}\log \frac{1}{\varepsilon} \right)$& --
     \\& & & &
     \\ \hline
    \end{tabular}   
    \end{threeparttable}
    }
    \caption{Comparison of our results for finding an $\varepsilon$-solution with other works. In the strongly convex-strongly concave case, convergence is measured by the distance to the solution. In the convex-concave case, convergence is measured in terms of the gap function.  {\em Notation:} $L_p, L_q, L_R$ - smoothness constants of functions $p(x), q(y)$ and $R(x, y)$ respectively, $L = \max \{L_p, L_q, L_R\}$, $\mu_x$ - strong convexity constant of $R(x, y)$ for fixed $y$, $\mu_y$ - strong concavity constant of $R(x, y)$ for fixed $x$,  $\mathcal{D}_x, \mathcal{D}_y$ - constants such that $\|x^*\| \leq \mathcal{D}_x$, $\|y^*\| \leq \mathcal{D}_y$, $\mathcal{D} = \max \{\mathcal{D}_x, \mathcal{D}_y\}$. For {\em bilinear case} $\lambda_{\max}(BB^T) > 0$ and $\lambda_{\min}(BB^T) \geq 0$ are maximum and minimum eigenvalue of $BB^T$ respectively, $L = \max(L_p, L_q, \sqrt{\lambda_{\max}(BB^T)})$, $\mu_p, \mu_q$ - strong convexity and strong concavity constants of $p(x), q(y)$ respectively. }
    \label{table_sum}
\end{table}
\end{minipage}
\subsection{Contributions and related works}
Motivated by this research question we introduce the novel algorithm that achieves the lower bound  on iteration complexity and optimally separate  oracle calls for composite and  saddle part. Below we provide more detailed contributions of this work.

$\bullet$ \textbf{New method.} We develop a novel algorithm (\Cref{ae:alg}) used a new sliding idea. We present the idea and convergence analysis of \Cref{ae:alg}. Moreover, this Algorithm has optimal iteration complexity and optimal complexity separation (see \Cref{table_sum}).

$\bullet$ \textbf{Best rates.} To the best of our knowledge, we are the first who present optimal algorithm for composite saddle point problem with complexity separation for the non-symmetric case (i.e., $\mu_x \not = \mu_y$).

$\bullet$ \textbf{Bilinear case.} We adopt our approach to bilinear saddle point problem. Our Algorithm achieve the lower bounds for iteration and oracle complexities (up to logarithmic factor). Moreover, we formulate these results for distributed optimization.

$\bullet$ \textbf{Experiments.}  Also, we present experiments which is not characteristic of most previous works. We provide numerical experiments for bilinear problem on real-world dataset which show the benefits of our approach.

\subsection{Preliminaries}
In this section, we introduce some notation and necessary assumptions used throughout the paper.

\textbf{Notation.}  We denote by $\|\cdot\|$ the standard Euclidean norms. We say that a function $f$ is $L$-smooth on $\mathbb{R}^{d}$ if its gradient is Lipshitz-continuous, i.e., 
\begin{gather}
    \|\nabla f (x)-\nabla f (y)\|\leq L\|x - y\|, 
\end{gather}
for some $L >0$ and any $x, y \in \mathbb{R}^{d}$. We say that a function $f$ is $\mu$-strongly convex on $\mathbb{R}^{d}$ if, for some $\mu > 0$ and any $x, y \in \mathbb{R}^{d}$ it holds that
\begin{gather}
    f (y) \geq f (x) + \langle\nabla f (x), y - x \rangle + \frac{\mu}{2}\|x - y\|^2.
\end{gather}
We say that a pair $(\hat x, \hat y)$ is an $\varepsilon$-solution to \eqref{problem:main} if $\|\hat x - x^*\|^2 + \|\hat y - y^*\|^2 \leq \varepsilon$ where $(x^*, y^*)$ is solution to \eqref{problem:main}. Also, we define the iteration complexity of an algorithm for solving problem \eqref{problem:main} as the number of iterations the algorithm requires to find an $\varepsilon$-solution of this problem. 
To present the idea of \Cref{ae:alg} we introduce $\text{prox}_{\eta f}(\hat x)$ operator
\begin{equation}\label{pr:prox}
    \text{prox}_{\eta f}(\hat x) = \arg \min_x \left\{f(x) + \frac{1}{2\eta}\|x - \hat x\|^2\right\}.
\end{equation}

Meanwhile, we say that a function $f(x)$ is proximal friendly, if we can compute $\text{prox}_{\eta f}(\hat x)$ (solve problem \eqref{pr:prox}) for any point $\hat x$ explicitly or in $\mathcal{O}(1)$ computer calculations.

Finally, we state the assumptions that we impose on problem \eqref{problem:main}.
\begin{assumption}\label{ass:p}
$p(x): \mathbb{R}^{d_x} \to \mathbb{R}$ is $L_p$-smooth and convex on $\mathbb{R}^{d_x}$.
\end{assumption}
\begin{assumption}\label{ass:q}
$q(y): \mathbb{R}^{d_y} \to \mathbb{R}$ is $L_q$-smooth and convex on $\mathbb{R}^{d_y}$.
\end{assumption}
\begin{assumption}\label{ass:R}
    $R(x,y): \mathbb{R}^{d_x} \times \mathbb{R}^{d_y}
 \to \mathbb{R}$ is $L_R$-smooth on $\mathbb{R}^{d_x} \times \mathbb{R}^{d_y}$, $\mu_x$-strongly convex on $\mathbb{R}^{d_x}$ for fixed $y$ and $\mu_y$-strongly concave on $\mathbb{R}^{d_y}$ for fixed $x$.
\end{assumption}
 
\section{Optimal Algorithm}
In this section, we present the key idea to develop \Cref{ae:alg}. After that we provide iteration and oracle complexities of \Cref{ae:alg}. Finally, we propose these complexities to the problem \eqref{problem:main} with one composite. 
\subsection{Idea}
To understand the idea of \Cref{ae:alg} we temporarily switch from the composite saddle point problem \eqref{problem:main} to the composite minimization problem: 
\begin{equation}\label{pr:min_comp}
    \min_x u(x) + v(x)
\end{equation} with $\mu$-strongly convex function $u(x) + v(x)$ and $L_u, L_v$-smooth functions $u(x), v(x)$ respectively. The basic and natural way to solve this problem is apply Nesterov's Accelerated Gradient Descent \cite{Nesterov1983AMF}, \cite{nesterov2018lectures} in the following form from \cite{nesterov2018lectures}:
\begin{align}
 \begin{matrix}
        x_g^k = \alpha x^k + (1 - \alpha)x_f^k
        \\ x^{k+1} = x^k - \eta ( \nabla u(x_g^k) + \nabla v(x_g^k))
        \\ x_f^{k+1} = x_g^k - \tilde \beta ( \nabla u(x_g^k) + \nabla v(x_g^k)).
    \end{matrix}
\end{align}
This method can be rewrite in equivalent form:
\begin{align}\label{AccGD}
    \begin{matrix}
        x_g^k = \alpha x^k + (1 - \alpha)x_f^k
        \\ x^{k+1} = x^k - \eta ( \nabla u(x_g^k) + \nabla v(x_g^k))
        \\ x_f^{k+1} = x_g^k + \beta ( x^{k+1} - x^k)
    \end{matrix}
\end{align}
with $\beta = \frac{\tilde \beta}{\eta}$. The oracle complexity for this method is $\mathcal{O}\left( \sqrt{\frac{L_u + L_v}{\mu}} \log \frac{1}{\varepsilon} \right) $  oracle calls of $\nabla u(x)$ and $\nabla v(x)$ to find an $\varepsilon$-solution to \eqref{pr:min_comp}. This approach does not allow to separate the oracles' complexities that, as mentioned above, may be important in some cases. For example, if we additionally assume that $v(x)$ is proximal friendly function the lower bounds for problem \eqref{pr:min_comp} are $\Omega \left( \sqrt{\frac{L_u}{\mu}} \log \frac{1}{\varepsilon} \right) $ oracle calls of $\nabla u(x)$. Consequently, for this case Nesterov's Accelerated Gradient Descent is not optimal method since it does not use the effectiveness computation of $\text{prox}_{\eta v}(\cdot)$. Due to we can apply more optimal method Accelerated Proximal Point Algorithm \cite{rokafellar1976}, \cite{guler1991}, \cite{auslender2006}, \cite{tseng2008}, \cite{lewis2016}:
\begin{align*} 
   \begin{matrix}
   x_g^k = \alpha x^k + (1 - \alpha)x_f^k
    \\ x^{k+1} = \text{prox}_{\eta v}(x^k - \eta \nabla u(x_g^k))
    \\ x_f^{k+1} = x_g^k + \beta (x^{k+1} - x^k),
   \end{matrix}
\end{align*}
which based on Nesterov's Accelerated Gradient Descent in form \eqref{AccGD}.
This method required $\mathcal{O}\left(\sqrt{\frac{L_u}{\mu}}\log\frac{1}{\varepsilon}\right)$ oracle calls of $\nabla u(x)$ and $\text{prox}_{\eta v}(\cdot)$ to find an $\varepsilon$-solution to the problem \eqref{pr:min_comp}. Moreover, Accelerated Proximal Point Algorithm does not require smoothness of function $v(x)$ and can be applied to problem \eqref{pr:min_comp} with non-smooth composites.  To adapt this approach for $L_v$-smooth and non-proximal friendly function $v(x)$ we rewrite $x^{k+1} = \text{prox}_{\eta v}(x^k - \eta \nabla u(x_g^k))$ for diffirentiable function $v(x)$ in implicit form:
\begin{equation}\label{eq:x_approx_prox}
    x^{k+1} = - \eta \nabla v(x^{k+1}) + x^k - \eta \nabla u(y^k),
\end{equation}
using the first optimal condition. Meanwhile, for non-proximal friendly function we can compute $\text{prox}_{\eta v}(x^k - \eta \nabla u(y^k))$ approximately with Accelerated Gradient Descent and compute $x^{k+1}$ by \eqref{eq:x_approx_prox} using this solution. Summing up the above, we  get the following  method which based on sliding technique \cite{lan2016gradient}
\begin{align*}
   \begin{matrix}
   x_g^k = \alpha x^k + (1 - \alpha)x_f^k
    \\ \hat x^{k+1} \approx \text{prox}_{\eta v}(x^k - \eta \nabla u(x_g^k)) 
    \\ x^{k+1} = x^k - \eta(\nabla u(x_g^k) + \nabla v(\hat x^{k+1}))
    \\ x_f^{k+1} = x_g^k + \beta(\hat x^{k+1} - x^k).
   \end{matrix}
\end{align*}
Also, we can rewrite $\text{prox}_{\eta v}(\cdot)$ using definition 
\begin{align*}
     \text{prox}_{\eta v}(x^k - \eta \nabla u(x_g^k)) &= \arg \min_{x} \left\{\eta v(x) + \frac{1}{2}\| x - ( x^k - \eta \nabla u (x_g^k)) \|^2\right\}
     \\&= \arg \min_{x} \left\{ v(x) + \langle \nabla u (x_g^k), x  \rangle +  \frac{1}{2 \eta}\| x - x^k \|^2\right\},
\end{align*} and get
\begin{align}\label{method:min_comp}
   \begin{matrix}
   x_g^k = \alpha x^k + (1 - \alpha)x_f^k
    \\ \hat x^{k+1} \approx \arg \min_x\left\{v(x) + \langle \nabla u(x_g^k), x \rangle + \frac{1}{2\eta}\|x - x^k\|^2\right\} 
    \\ x^{k+1} = x^k - \eta(\nabla u(x_g^k) + \nabla v(\hat x^{k+1}))
    \\ x_f^{k+1} = x_g^k + \beta(\hat x^{k+1} - x^k).
   \end{matrix}
\end{align}
The oracle complexity of this method is $\mathcal{O} \left(\sqrt{\frac{L_u}{\mu}}\log\frac{1}{\varepsilon}\right)$ oracle calls of $\nabla u(x)$ and $\mathcal{O} \left(\sqrt{\frac{L_v}{\mu}}\log\frac{1}{\varepsilon}\right)$ oracle calls of $\nabla v(x)$ to find an $\varepsilon$-solution. Note that this method allow us to separate oracle complexities for composite minimization problem. Now we are ready to get back to composite saddle point problems. Note that for problem \eqref{pr:min_comp}  is enough to find point $\hat x$ such that $\nabla u(\hat x) + \nabla v(\hat x) = 0$ while for problem \eqref{problem:main} we find point $(\hat x, \hat y)$ such that $\begin{pmatrix}
    \nabla p(\hat x) + \nabla_x R(\hat x, \hat y)
    \\ \nabla q(\hat y) -\nabla_y R(\hat x,\hat y)
\end{pmatrix} = \begin{pmatrix}
     0
    \\ 0
\end{pmatrix}$. This fact is allow us to adapt approach \eqref{method:min_comp} to problem \eqref{problem:main} with replace  $\nabla u(x)$ on operator $A = \begin{pmatrix}
    \nabla p(x)
    \\ \nabla q(y)
\end{pmatrix}$ and $\nabla v(x)$ on operator $B = \begin{pmatrix}
    \nabla_x R(x,y)
    \\ -\nabla_y R(x,y)
\end{pmatrix}$  and develop \Cref{ae:alg}.
\begin{algorithm}[H]
	\caption{}
	\label{ae:alg}
	\begin{algorithmic}[1]
		\State {\bf Input:} $x^0=x_f^0 \in \mathbb{R}^{d_x}$, $y^0=y_f^0 \in \mathbb{R}^{d_y}$
		\State {\bf Parameters:} $\alpha \in (0,1)$, $\eta_x$, $\eta_y$.
		\For{$k=0,1,2,\ldots, K-1$}
			\State $x_g^k = \alpha x^k + (1-\alpha)  x_f^k$, \ \ \ $y_g^k = \alpha y^k + (1-\alpha)  y_f^k$\label{ae:line:1}
			\State $(\hat x^{k+1}, \hat y^{k+1}) \approx \arg \min_{x \in \mathbb{R}^{d_x}}\arg\max_{y \in \mathbb{R}^{d_y}}A_\eta^k(x, y)$\label{ae:line:2}
                \begin{equation}\label{problem:aux}
                    A_\eta^k(x, y) := \langle\nabla p(x_g^k),x \rangle + \frac{1}{2\eta_x}\|x - x^k\|^2 + R(x, y) - \langle\nabla q(y_g^k),y\rangle - \frac{1}{2\eta_y}\|y - y^k\|^2 
                \end{equation}
			\State $x^{k+1} = x^k - \eta_x \left(\nabla p(x_g^{k}) + \nabla_x R(\hat x^{k+1}, \hat y^{k+1})\right)$\label{ae:line:3}
            \Statex \ \ \ \ \ \  $y^{k+1} = y^k - \eta_y  \left(\nabla q(y_g^{k} ) - \nabla_y R(\hat x^{k+1}, \hat y^{k+1})\right)$
            \State $ x_f^{k+1} = x_g^k +\alpha(\hat x^{k+1} - x^k)$, \ \ \  $ y_f^{k+1} = y_g^k + \alpha(\hat y^{k+1} - y^k)$ \label{ae:line:4}
		\EndFor
		\State {\bf Output:} $x^K$, $y^K$
	\end{algorithmic}
\end{algorithm}

\subsection{Complexity}
In the following theorem we present the linear convergence of \Cref{ae:alg}.
\begin{theorem}\label{th:main}
Consider \Cref{ae:alg} for solving Problem \ref{problem:main} under  Assumptions \ref{ass:p}-\ref{ass:R}, with the following tuning for case $\frac{L_p}{\mu_x} > \frac{L_q}{\mu_y}$: 
\begin{equation}\label{ae:tunung_1}
    \alpha = \min\left\{1,\sqrt{\frac{\mu_x}{L_p}}\right\}, \quad  \eta_x = \min \left\{\frac{1}{3\mu_x},\frac{1}{3 L_p \alpha}\right\}, \quad \eta_y = \frac{\mu_x}{\mu_y}\eta_x,
	\end{equation}
 or for case $\frac{L_q}{\mu_y} > \frac{L_p}{\mu_x}$: 
\begin{equation}\label{ae:tunung_2}
    \alpha = \min\left\{1,\sqrt{\frac{\mu_y}{L_q}}\right\}, \quad  \eta_y = \min \left\{\frac{1}{3\mu_y},\frac{1}{3 L_q \alpha}\right\}, \quad \eta_x = \frac{\mu_y}{\mu_x}\eta_y,
	\end{equation}
and let $(\hat x^{k+1}, \hat y^{k+1})$   in \cref{ae:line:2} satisfy    
	\begin{equation}\label{aux:grad_app}
	\eta_x\| 
            \nabla_x A_{\eta}^k (\hat x^{k+1}, \hat y^{k+1})\|^2  +
            \eta_y\| \nabla_y A_{\eta}^k(\hat x^{k+1}, \hat y^{k+1})\|^2 \leq \frac{1}{6\eta_x}\|\hat x^{k+1} - x^k\|^2 + \frac{1}{6\eta_y}\|\hat y^{k+1} - y^k\|^2.
	\end{equation}
 Then, for any 
\begin{equation}
    K \geq 3 \max \left\{1, \sqrt{\frac{L_p}{\mu_x}}, \sqrt{\frac{L_q}{\mu_y}}\right\}\log \frac{\frac{1}{\eta_x}\|x^0 - x^*\|^2 + \frac{1}{\eta_y}\|y^0 - y^*\|^2 + \frac{2}{\alpha}\mathrm{D}_p(x_f^0, x^*) + \frac{2}{\alpha}\mathrm{D}_q(y_f^0, y^*)}{\varepsilon}
\end{equation}
we have the following estimate for the distance to the solution $(x^*, y^*)$:
\begin{equation}
    \frac{1}{\eta_x}\|x^K - x^*\|^2 + \frac{1}{\eta_y}\|y^K - y^*\|^2 \leq \varepsilon.
\end{equation}
\end{theorem}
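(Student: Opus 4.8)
The plan is to exhibit a Lyapunov (potential) function that contracts by the fixed factor $1-\tfrac{\alpha}{3}$ at every iteration. Guided by the quantity inside the logarithm, I would set
\[
\Psi^k := \frac{1}{\eta_x}\|x^k-x^*\|^2 + \frac{1}{\eta_y}\|y^k-y^*\|^2 + \frac{2}{\alpha}\mathrm{D}_p(x_f^k,x^*) + \frac{2}{\alpha}\mathrm{D}_q(y_f^k,y^*),
\]
where $\mathrm{D}_p(x,x^*)=p(x)-p(x^*)-\langle\nabla p(x^*),x-x^*\rangle\ge 0$ is the Bregman divergence of $p$ (and $\mathrm{D}_q$ analogously), so that $\Psi^0$ is exactly the logarithm's argument. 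The goal is the one-step contraction $\Psi^{k+1}\le(1-\tfrac{\alpha}{3})\Psi^k$; iterating it, using $(1-\tfrac{\alpha}{3})^K\le e^{-\alpha K/3}$ and the identity $\tfrac{1}{\alpha}=\max\{1,\sqrt{L_p/\mu_x},\sqrt{L_q/\mu_y}\}$ (which holds for tuning \eqref{ae:tunung_1} in the case $L_p/\mu_x>L_q/\mu_y$), forces $\Psi^K\le\varepsilon$ for the stated $K$, and the claimed distance bound then follows because the Bregman terms are nonnegative.

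Before the main estimate I would record two consequences of the tuning. First, the choices give $\mu_x\eta_x=\mu_y\eta_y=\tfrac{\alpha}{3}$, so that the strong-monotonicity gain $\mu_x\|x^{k+1}-x^*\|^2+\mu_y\|y^{k+1}-y^*\|^2$ is precisely $\tfrac{\alpha}{3}$ times the weighted distance term of $\Psi$; this is exactly why the non-symmetric scaling $\eta_y=\tfrac{\mu_x}{\mu_y}\eta_x$ is the right one, and the case split is forced because $L_p/\mu_x>L_q/\mu_y$ is equivalent to $\eta_y\le\tfrac{1}{3L_q\alpha}$, the $y$-analogue of the bound $\eta_x\le\tfrac{1}{3L_p\alpha}$ used on the $x$-block. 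Second, from \cref{ae:line:3} and the first-order conditions for $A_\eta^k$ I would derive the displacement identities $x^{k+1}-\hat x^{k+1}=-\eta_x\nabla_x A_\eta^k(\hat x^{k+1},\hat y^{k+1})$ and $y^{k+1}-\hat y^{k+1}=\eta_y\nabla_y A_\eta^k(\hat x^{k+1},\hat y^{k+1})$, so that the inexactness condition \eqref{aux:grad_app} becomes $\tfrac{1}{\eta_x}\|x^{k+1}-\hat x^{k+1}\|^2+\tfrac{1}{\eta_y}\|y^{k+1}-\hat y^{k+1}\|^2\le\tfrac{1}{6\eta_x}\|\hat x^{k+1}-x^k\|^2+\tfrac{1}{6\eta_y}\|\hat y^{k+1}-y^k\|^2$, i.e. the gap between the algorithm's iterate and the exact prox solution is controlled by the displacement.

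The heart of the argument is the one-step inequality, which I would first prove in the exact case $\hat z^{k+1}=z^{k+1}$. Expanding the distance terms with the update rule and the identity $\langle a-b,a-c\rangle=\tfrac12(\|a-b\|^2+\|a-c\|^2-\|b-c\|^2)$ telescopes $\tfrac{1}{\eta_x}\|x^{k+1}-x^*\|^2$ against $\tfrac{1}{\eta_x}\|x^k-x^*\|^2$, leaving a negative displacement term $-\tfrac{1}{\eta_x}\|x^{k+1}-x^k\|^2$ and a cross term $-2\langle\nabla p(x_g^k)+\nabla_x R(z^{k+1}),\,x^{k+1}-x^*\rangle$ (and likewise in $y$). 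I would lower-bound the cross term by splitting into the $R$-part and the $p,q$-part: for the $R$-part, strong monotonicity of $z\mapsto(\nabla_x R,-\nabla_y R)$ — whose mixed-derivative coupling cancels by skew-symmetry, leaving the clean bound $\mu_x\|x^{k+1}-x^*\|^2+\mu_y\|y^{k+1}-y^*\|^2$ — together with the saddle optimality $\nabla p(x^*)+\nabla_x R(z^*)=0$, $\nabla q(y^*)-\nabla_y R(z^*)=0$ reduces the remainder to $\langle\nabla p(x_g^k)-\nabla p(x^*),x^{k+1}-x^*\rangle+\langle\nabla q(y_g^k)-\nabla q(y^*),y^{k+1}-y^*\rangle$. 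For this last piece I would run the standard Nesterov estimate-sequence manipulation: using $x_g^k=\alpha x^k+(1-\alpha)x_f^k$ from \cref{ae:line:1}, the descent lemma for $p$ at $x_f^{k+1}=x_g^k+\alpha(x^{k+1}-x^k)$, and convexity of $p$, one produces the Bregman term $\tfrac{2}{\alpha}\mathrm{D}_p(x_f^{k+1},x^*)$ telescoping against its predecessor with the correct weight, while the smoothness leftover $\tfrac{L_p}{2}\alpha^2\|x^{k+1}-x^k\|^2$ is absorbed by the displacement term thanks to $\eta_x\le\tfrac{1}{3L_p\alpha}$. Collecting everything, with $\mu_x\eta_x=\mu_y\eta_y=\tfrac{\alpha}{3}$ calibrating the distance contraction, yields $\Psi^{k+1}\le(1-\tfrac{\alpha}{3})\Psi^k$ in the exact case.

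Finally I would reinstate the inexactness: wherever $z^{k+1}$ stood for the exact prox solution it must be replaced by $\hat z^{k+1}$, generating residuals of the form $\pm\tfrac{1}{\eta}\langle z^{k+1}-\hat z^{k+1},\cdot\rangle$, which Young's inequality converts into multiples of $\tfrac{1}{\eta_x}\|x^{k+1}-\hat x^{k+1}\|^2+\tfrac{1}{\eta_y}\|y^{k+1}-\hat y^{k+1}\|^2$ and $\|\hat x^{k+1}-x^k\|^2,\|\hat y^{k+1}-y^k\|^2$; the slack deliberately left in the step-size constraints (the factors $3$ and $6$) is exactly what \eqref{aux:grad_app} needs to swallow these residuals while preserving the factor $1-\tfrac{\alpha}{3}$. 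I expect the main obstacle to be precisely this bookkeeping: ensuring that the smoothness leftover, the strong-monotonicity gain, and the inexactness residual all fit simultaneously under the single contraction factor, and checking that the $y$-block inequalities go through with the \emph{shared} acceleration parameter $\alpha$ (which is where $L_p/\mu_x>L_q/\mu_y$ is indispensable). The symmetric case $L_q/\mu_y>L_p/\mu_x$ follows verbatim by exchanging the two blocks and invoking tuning \eqref{ae:tunung_2}.
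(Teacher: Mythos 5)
Your plan is essentially the paper's own proof: the same potential $\Psi^k$, the same target recursion $\Psi^{k+1}\le(1-\tfrac{\alpha}{3})\Psi^k$, and the same ingredients --- strong monotonicity of $(\nabla_x R,-\nabla_y R)$ combined with the saddle optimality conditions (the paper's first lemma), Nesterov-style Bregman telescoping for $p,q$ with the smoothness leftover absorbed via $\eta_x\le\tfrac{1}{3L_p\alpha}$ and $\eta_y\le\tfrac{1}{3L_q\alpha}$ (your observation that the case split is exactly what guarantees the latter matches the paper's use of its condition-number assumption), and the stopping criterion \eqref{aux:grad_app} entering through the displacement identity $x^{k+1}-\hat x^{k+1}=-\eta_x\nabla_x A_\eta^k(\hat x^{k+1},\hat y^{k+1})$, which the paper uses implicitly when it recognizes $\nabla p(x_g^k)+\nabla_x R(\hat x^{k+1},\hat y^{k+1})+\tfrac{1}{\eta_x}(\hat x^{k+1}-x^k)$ as $\nabla_x A_\eta^k$. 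The only organizational difference is that you prove the exact-prox case and then perturb with Young's inequality, whereas the paper runs the inexact analysis in one pass: the identity $2\langle a,b\rangle=\|a+b\|^2-\|a\|^2-\|b\|^2$ makes the error term $2\|\nabla A_\eta^k(\hat z^{k+1})\|_{P^{-1}}^2$ appear exactly, and \eqref{aux:grad_app} cancels it against $\tfrac13\|\hat z^{k+1}-z^k\|_P^2$ --- no Young step or extra bookkeeping is needed.

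One step of your sketch does not deliver what you claim, and it is exactly the calibration you flag as the crux. You keep the strong-monotonicity gain $\mu_x\|x^{k+1}-x^*\|^2+\mu_y\|y^{k+1}-y^*\|^2$ at iterate $k+1$ and assert that $\mu_x\eta_x=\mu_y\eta_y=\tfrac{\alpha}{3}$ turns it into the contraction $1-\tfrac{\alpha}{3}$. But a gain at $k+1$ can only be moved to the left-hand side, giving $\bigl(1+\tfrac{\alpha}{3}\bigr)\tfrac{1}{\eta_x}\|x^{k+1}-x^*\|^2\le\cdots$, i.e.\ a contraction factor $\tfrac{1}{1+\alpha/3}$, which is strictly larger than $1-\tfrac{\alpha}{3}$; iterating it does not yield the theorem's explicit constant $3$ in the bound on $K$. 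The paper bridges this by converting the gain back to iterate $k$: it pairs $-\mu_x\|\hat x^{k+1}-x^*\|^2$ with the unused portion of the displacement term, $-\tfrac{1}{3\eta_x}\|\hat x^{k+1}-x^k\|^2$ (noting $\tfrac{1}{3\mu_x\eta_x}\ge 1$), and uses $\|a\|^2+\|b\|^2\ge\tfrac12\|a-b\|^2$ to produce a deduction proportional to $\|x^k-x^*\|^2$ on the right-hand side, so the distance part of $\Psi^k$ itself contracts by $1-\mu_x\eta_x$. (To be fair, the paper silently drops the factor $\tfrac12$ in this step, so its own constant $3$ should really be a $6$; but the structure of that conversion is what your sketch is missing.) Everything else --- the tuning identities, $1/\alpha=\max\{1,\sqrt{L_p/\mu_x},\sqrt{L_q/\mu_y}\}$, and the final iteration of the recursion --- matches the paper.
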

Proof of \Cref{th:main} you can find in \Cref{sec:proof_th1}.

\textbf{Auxiliary subproblem complexity.} 
At each iteration of \Cref{ae:alg} we need to find $\hat x^{k+1}, \hat y^{k+1}$  (solution to the problem \eqref{problem:aux}) that satisfies condition \eqref{aux:grad_app}. $A_{\eta}^k(x, y)$ is $\left(L_R + \frac{1}{\eta_x}\right)$-smooth in $x$
 for fixed $y$ and $\left(L_R + \frac{1}{\eta_y}\right)$-smooth in $y$ for fixed $x$. Due to this property we get the following inequality
 \begin{align*}
    \eta_x &\|\nabla_x A_{\eta}^k (\hat x^{k+1}, \hat y^{k+1})\|^2  + \eta_y\| \nabla_y A_{\eta}^k(\hat x^{k+1}, \hat y^{k+1})\|^2 
    \\&\leq\max\left\{\eta_x\left(L_R + \frac{1}{\eta_x}\right)^2, \eta_y \left(L_R + \frac{1}{\eta_y}\right)^2\right\}\left(\|\hat x^{k+1} - \hat x^{k+1}_*\|^2 + \|\hat y^{k+1} - \hat y^{k+1}_*\|^2\right),
\end{align*}
where $(\hat x^{k+1}_*, \hat y^{k+1}_*)$ is the solution to the problem \eqref{problem:aux}. It means that $\frac{\frac{1}{6\eta_x}\|\hat x^{k+1} - x^k\|^2 + \frac{1}{6\eta_y}\|\hat y^{k+1} - y^k\|^2}{\max\left\{\eta_x\left(L_R + \frac{1}{\eta_x}\right)^2, \eta_y \left(L_R + \frac{1}{\eta_y}\right)^2\right\}}$-solution to \eqref{problem:aux} satisfies condition \eqref{aux:grad_app}. To find this solution we can apply the algorithm FOAM (Algorithm 4 from \cite{FOAM}) from starting point $(x^k, y^k)$ and get the following complexity.
\begin{theorem}\label{th:inner_complexity} Algorithm 4 from \cite{FOAM} requires the following number of gradient evaluations: 
\begin{equation}
    T = \mathcal{O}\left(\left(1 + L_R\sqrt{\eta_x\eta_y}\right)\log \frac{1}{\gamma}\right)
\end{equation}
to find an $\gamma$-accurate solution of problem \eqref{problem:aux} with
\begin{equation}\label{gamma}
    \gamma = \frac{\frac{1}{6\eta_x}\|\hat x^{k+1} - x^k\|^2 + \frac{1}{6\eta_y}\|\hat y^{k+1} - y^k\|^2}{\max\left\{\eta_x\left(L_R + \frac{1}{\eta_x}\right)^2, \eta_y \left(L_R + \frac{1}{\eta_y}\right)^2\right\}}.
\end{equation}
\end{theorem}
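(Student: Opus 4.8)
The plan is to recognize that the auxiliary subproblem \eqref{problem:aux} is itself a strongly convex--strongly concave composite saddle point problem of exactly the form \eqref{problem:main}, and then to invoke the guarantee of Algorithm~4 from \cite{FOAM} as a black box with the correct effective constants. The natural way to split \eqref{problem:aux} into ``composite'' and ``saddle'' parts is to keep the coupling term $R(x,y)$ as the saddle part and to absorb the quadratic regularizers together with the frozen linear terms into the composites:
\[
\hat p(x) = \langle \nabla p(x_g^k), x\rangle + \tfrac{1}{2\eta_x}\|x - x^k\|^2, \qquad \hat q(y) = \langle \nabla q(y_g^k), y\rangle + \tfrac{1}{2\eta_y}\|y - y^k\|^2 .
\]
With this reading $\hat p$ is $\tfrac{1}{\eta_x}$-smooth, $\hat q$ is $\tfrac{1}{\eta_y}$-smooth, and $R$ keeps smoothness $L_R$; the decisive observation is that the regularizers also raise the strong convexity/concavity of the whole objective to $\hat\mu_x = \mu_x + \tfrac{1}{\eta_x}$ in $x$ and $\hat\mu_y = \mu_y + \tfrac{1}{\eta_y}$ in $y$.

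First I would verify these moduli directly from \Cref{ass:p}--\Cref{ass:R} (the linear terms contribute nothing to curvature, the quadratics contribute $\tfrac1{\eta_x},\tfrac1{\eta_y}$, and $R$ contributes $\mu_x,\mu_y$), so that \eqref{problem:aux} indeed meets the assumptions required by \cite{FOAM} with constants $(\hat L_p,\hat L_q,L_R,\hat\mu_x,\hat\mu_y) = (\tfrac1{\eta_x},\tfrac1{\eta_y},L_R,\mu_x+\tfrac1{\eta_x},\mu_y+\tfrac1{\eta_y})$. Since Algorithm~4 from \cite{FOAM} converges linearly, reaching a $\gamma$-accurate solution (in squared distance to the unique saddle point of \eqref{problem:aux}) costs
\[
T = \mathcal{O}\!\left(\frac{L_R + \sqrt{\hat L_p \hat L_q}}{\sqrt{\hat\mu_x \hat\mu_y}}\,\log\frac{1}{\gamma}\right)
\]
gradient evaluations of $R$; here I would be careful that the cheap proximal-friendly composites $\hat p,\hat q$ do not contribute to the count of $\nabla R$ calls, and that the accuracy measure output by \cite{FOAM} matches the distance-type accuracy $\gamma$ used in \eqref{gamma}.

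It then remains to simplify the condition-number factor using the tuning \eqref{ae:tunung_1}--\eqref{ae:tunung_2}. The caps $\eta_x \le \tfrac{1}{3\mu_x}$ and $\eta_y \le \tfrac{1}{3\mu_y}$ give $\tfrac1{\eta_x} \le \hat\mu_x \le \tfrac{4}{3\eta_x}$ and $\tfrac1{\eta_y} \le \hat\mu_y \le \tfrac{4}{3\eta_y}$, hence $\sqrt{\hat\mu_x\hat\mu_y} = \Theta(1/\sqrt{\eta_x\eta_y})$. Consequently the composite part collapses, $\tfrac{\sqrt{\hat L_p\hat L_q}}{\sqrt{\hat\mu_x\hat\mu_y}} = \tfrac{1/\sqrt{\eta_x\eta_y}}{\Theta(1/\sqrt{\eta_x\eta_y})} = \Theta(1)$, while the saddle part becomes $\tfrac{L_R}{\sqrt{\hat\mu_x\hat\mu_y}} = \Theta\!\big(L_R\sqrt{\eta_x\eta_y}\big)$, and together these yield $T = \mathcal{O}\big((1 + L_R\sqrt{\eta_x\eta_y})\log\tfrac1\gamma\big)$.

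I expect the main obstacle to be step~2, \emph{not} the algebra of step~3: one must justify invoking \cite{FOAM} with the strong convexity supplied by the composites $\hat p,\hat q$ rather than by $R$ itself, and check that their smoothness then enters the rate only through the benign geometric-mean term $\sqrt{\hat L_p\hat L_q}$. This point is essential, since absorbing the quadratic regularizers into $R$ instead would force the smoothness of the saddle part up to $L_R + \max\{1/\eta_x,1/\eta_y\}$, and the resulting $\max\{\eta_x,\eta_y\}$-dependence would fail to collapse to the geometric mean $\sqrt{\eta_x\eta_y}$ precisely in the non-symmetric regime $\mu_x \ne \mu_y$ that this paper targets. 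Keeping the regularizers as strongly convex, proximal-friendly composites is therefore what makes the clean bound go through, and an equivalent, perhaps more transparent, route is to first rescale $x = \sqrt{\eta_x}\,\xi + x^k$, $y = \sqrt{\eta_y}\,\upsilon + y^k$, which normalizes both regularizers to $\tfrac12\|\cdot\|^2$ and exposes the coupling of $R$ as $\sqrt{\eta_x\eta_y}\,\nabla_{xy}R$.
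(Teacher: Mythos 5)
Your proof is correct given the form of the FOAM guarantee you invoke, but it takes a genuinely different route from the paper's. The paper does \emph{not} apply \cite{FOAM} to \eqref{problem:aux} directly: it first rescales variables, $x=\alpha u$, $y=\beta v$ (its \Cref{lemma:var_uv} tracks how $L_R,\mu_x,\mu_y$ transform under this change), applies Corollary~1 of \cite{FOAM} to the rescaled problem \eqref{pr:aux_uv} in a form where the two composite smoothnesses enter \emph{additively}, namely $T=\mathcal{O}\bigl(\bigl(\tilde L_R+\tfrac{\alpha^2}{\eta_x}+\tfrac{\beta^2}{\eta_y}\bigr)\cdot\bigl(\mu_u+\tfrac{\alpha^2}{\eta_x}\bigr)^{-1/2}\bigl(\mu_v+\tfrac{\beta^2}{\eta_y}\bigr)^{-1/2}\log\tfrac1\gamma\bigr)$, and then picks $\alpha,\beta$ so that $\tfrac{\alpha^2}{\eta_x}$ and $\tfrac{\beta^2}{\eta_y}$ are balanced and the sum collapses to the geometric mean. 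In other words, the paper handles the non-symmetry $\eta_x\neq\eta_y$ (which you correctly isolate as the crux) by a change of coordinates, whereas you handle it by keeping the regularizers as strongly convex composites and invoking the guarantee with $\sqrt{\hat L_p\hat L_q}$ in the numerator and the \emph{total} moduli $\mu_x+\tfrac1{\eta_x}$, $\mu_y+\tfrac1{\eta_y}$ in the denominator. Your invocation is consistent with how this paper itself cites \cite{FOAM} elsewhere (\Cref{table_sum} and \Cref{discussion:subsec:sc-sc} state the rate $\mathcal{O}\bigl(\tfrac{L_R+\sqrt{L_pL_q}}{\sqrt{\mu_x\mu_y}}\log\tfrac1\varepsilon\bigr)$), and the total-strong-convexity denominator matches the one the paper's own proof uses, so your black-box step is no less justified than the paper's; both proofs stand or fall with the exact statement of Corollary~1 in \cite{FOAM}, and you are right to flag that as the only real obstacle. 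What each approach buys: yours is shorter and dispenses with \Cref{lemma:var_uv} entirely; the paper's rescaling is what one is forced into if only the additive form of the FOAM bound is available, since under that form your direct application would leave a spurious $\sqrt{\max\{\eta_x,\eta_y\}/\min\{\eta_x,\eta_y\}}$ term. Conversely, the rescaling route is itself delicate in a way your closing remark makes precise: by \Cref{lemma:var_uv} the full smoothness of the coupling inflates to $\max\{\alpha^2,\beta^2\}L_R$, and this cancels against the denominator's $\alpha\beta/\sqrt{\eta_x\eta_y}$ only when $\alpha\asymp\beta$, which is in tension with balancing $\tfrac{\alpha^2}{\eta_x}=\tfrac{\beta^2}{\eta_y}$; only the cross block, which scales as $\alpha\beta\,\nabla_{xy}R$, is rescaling-invariant. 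So your direct argument is arguably the cleaner of the two, provided the geometric-mean form of the FOAM corollary is what one has.
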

Proof of \Cref{th:inner_complexity} you can find in \Cref{sec:proof_th2}.
\begin{remark}
Note that the stopping criterion \eqref{aux:grad_app} for solving the auxiliary problem \eqref{problem:aux} is practical due to it does not depend on point $(\hat x^{k+1}_*, \hat y^{k+1}_*)$ (solution to \eqref{problem:aux}).
\end{remark}

\textbf{Overall complexity.}  
To formulate the total iterative complexity of \Cref{ae:alg}  we do some mathematical calculations for case $\frac{L_p}{\mu_x} \geq \frac{L_q}{\mu_y}$.
\begin{align*}
    K \times T &= \mathcal{O}\left(\left(1 + \sqrt{\frac{L_p}{\mu_x}}\right)\log \frac{1}{\varepsilon}\right) \times \mathcal{O}\left(\left(1 + L_R\sqrt{\eta_x \eta_y}\right)\log \frac{1}{\gamma}\right)
    \\& = {\mathcal{O}}\left(\left(1 + \sqrt{\frac{L_p}{\mu_x}} + L_R\sqrt{\frac{L_p}{\mu_y}}\eta_x \right)\log \frac{L_R}{\min\{\mu_x, \mu_y\}}\log \frac{1}{\varepsilon}\right)\\& = {\mathcal{O}}\left(\left(1 + \sqrt{\frac{L_p}{\mu_x}} + \frac{L_R}{\sqrt{\mu_x\mu_y}} \right)\log \frac{L_R}{\min\{\mu_x, \mu_y\}}\log \frac{1}{\varepsilon}\right)
\end{align*}
The case $\frac{L_q}{\mu_y} > \frac{L_p}{\mu_x}$ is symmetric. Solving the auxiliary subproblem \eqref{problem:aux} does not require calling oracles $\nabla p(x), \nabla q(y)$. These oracles are called only in \cref{ae:line:3} of \Cref{ae:alg}. Summing up the oracle complexity of \Cref{ae:alg} we present in the following theorem. 
\begin{theorem}\label{th:oracle_complexity}
Consider Problem~\eqref{problem:main} under  \Crefrange{ass:p}{ass:R}. Then, to find an $\varepsilon$-solution,  \Cref{ae:alg} requires
\begin{equation*}
	\mathcal{O}\left(\max\left\{1, \sqrt{\frac{L_p}{\mu_x}}, \sqrt{\frac{L_q}{\mu_y}}\right\}\log\frac{1}{\varepsilon}\right) ~~ \text{calls of }\,\, \nabla p(x), \nabla q(y)
\end{equation*}
and 
\begin{equation*}
    \mathcal{O}\left(\left(\sqrt{\frac{L_p}{\mu_x}} + \sqrt{\frac{L_q}{\mu_y}} + \frac{L_R}{\sqrt{\mu_x \mu_y}}\right)\log \frac{L_R}{\min\{\mu_x, \mu_y\}}\log\frac{1}{\varepsilon}\right) ~~ \text{calls of }\,\, \nabla R(x, y).
\end{equation*} 
\end{theorem}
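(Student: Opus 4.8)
The statement is an accounting of the two oracle types across the outer loop of \Cref{ae:alg} and the inner FOAM solves, so the plan is to bound the per-oracle count in each loop and multiply. First I would fix which line touches which oracle. The gradients $\nabla p(x_g^k)$ and $\nabla q(y_g^k)$ enter only through \cref{ae:line:2} and \cref{ae:line:3}, and in the auxiliary problem \eqref{problem:aux} they appear solely as the fixed linear terms $\langle \nabla p(x_g^k), x\rangle$ and $\langle \nabla q(y_g^k), y\rangle$; hence the inner solver never re-evaluates them, and each outer iteration spends exactly one call of each. By contrast $\nabla R$ is queried inside every inner solve of \cref{ae:line:2} and once more in \cref{ae:line:3}. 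Therefore the total counts are $\mathcal{O}(K)$ for $\nabla p, \nabla q$ and $\mathcal{O}(K(T+1)) = \mathcal{O}(KT)$ for $\nabla R$, where $K$ is the number of outer iterations and $T$ the per-subproblem budget of \Cref{th:inner_complexity}.

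Second, I would pin down $K$. \Cref{th:main} guarantees $\tfrac{1}{\eta_x}\|x^K - x^*\|^2 + \tfrac{1}{\eta_y}\|y^K - y^*\|^2 \leq \varepsilon$ as soon as $K$ exceeds a constant times $\max\{1, \sqrt{L_p/\mu_x}, \sqrt{L_q/\mu_y}\}\log(C_0/\varepsilon)$. To convert this weighted bound into the Euclidean $\varepsilon$-solution of the Preliminaries I would use $\|x^K - x^*\|^2 + \|y^K - y^*\|^2 \leq \max\{\eta_x,\eta_y\}\bigl(\tfrac{1}{\eta_x}\|x^K-x^*\|^2 + \tfrac{1}{\eta_y}\|y^K-y^*\|^2\bigr)$ and run \Cref{th:main} to target accuracy $\varepsilon/\max\{\eta_x,\eta_y\}$; this only rescales the argument of the logarithm, so $K = \mathcal{O}(\max\{1, \sqrt{L_p/\mu_x}, \sqrt{L_q/\mu_y}\}\log\tfrac1\varepsilon)$, which is exactly the claimed $\nabla p, \nabla q$ bound.

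Third, for the $\nabla R$ count I would substitute $T = \mathcal{O}((1 + L_R\sqrt{\eta_x\eta_y})\log\tfrac1\gamma)$ from \Cref{th:inner_complexity} and compute $K\cdot T$ in the case $L_p/\mu_x \geq L_q/\mu_y$ (the other case is symmetric under swapping the two blocks in \eqref{ae:tunung_1}--\eqref{ae:tunung_2}). With the tuning \eqref{ae:tunung_1} and $L_p \geq \mu_x$ one gets $\eta_x = 1/(3\sqrt{L_p\mu_x})$ and $\eta_y = (\mu_x/\mu_y)\eta_x$, whence $L_R\sqrt{\eta_x\eta_y} = L_R\eta_x\sqrt{\mu_x/\mu_y} = L_R/(3\sqrt{L_p\mu_y})$. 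Since here $K = \mathcal{O}(\sqrt{L_p/\mu_x}\log\tfrac1\varepsilon)$, the product of the non-logarithmic factors is $\sqrt{L_p/\mu_x}\,(1 + L_R/\sqrt{L_p\mu_y}) = \sqrt{L_p/\mu_x} + L_R/\sqrt{\mu_x\mu_y}$, using $\sqrt{L_p/\mu_x}\cdot L_R/\sqrt{L_p\mu_y} = L_R/\sqrt{\mu_x\mu_y}$; adjoining the free term $\sqrt{L_q/\mu_y} \leq \sqrt{L_p/\mu_x}$ puts this into the stated symmetric form.

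Finally, the one genuinely delicate point is replacing $\log\tfrac1\gamma$ by $\log\tfrac{L_R}{\min\{\mu_x,\mu_y\}}$. From \eqref{gamma}, $1/\gamma$ is the ratio of $M := \max\{\eta_x(L_R+1/\eta_x)^2, \eta_y(L_R+1/\eta_y)^2\}$ to the weighted squared step $\tfrac{1}{6\eta_x}\|\hat x^{k+1}-x^k\|^2 + \tfrac{1}{6\eta_y}\|\hat y^{k+1}-y^k\|^2$, which is self-referential in the inner iterate. I would argue, via the linear (relative) convergence of FOAM, that the effective reduction factor FOAM must achieve is $\Theta(\max\{\eta_x,\eta_y\}\cdot M)$, each relevant product of which equals, up to constants, $(1+\eta_x L_R)^2$, $(1+\eta_y L_R)^2$, or one of these scaled by $\mu_x/\mu_y$ or $\mu_y/\mu_x$. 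Using the tuning together with $\mu_x,\mu_y \leq L_R$ and $L_p \geq \mu_x$, each of $\eta_x L_R = L_R/(3\sqrt{L_p\mu_x}) \leq L_R/\mu_x$, $\eta_y L_R \leq L_R/\mu_y$, and $\mu_x/\mu_y \leq L_R/\mu_y$ is at most $L_R/\min\{\mu_x,\mu_y\}$, so $\log\tfrac1\gamma = \mathcal{O}(\log\tfrac{L_R}{\min\{\mu_x,\mu_y\}})$. I expect the main obstacle to be justifying this cancellation of the step inside $\gamma$, i.e. showing that the stopping test \eqref{aux:grad_app} is met after condition-number-many FOAM steps regardless of the step magnitude; once that is in place the remainder is substitution and the elementary bounds above.
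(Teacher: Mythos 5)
Your proposal follows essentially the same route as the paper's own argument: one call of $\nabla p,\nabla q$ per outer iteration (since they enter \eqref{problem:aux} only as fixed linear terms), $K$ taken from \Cref{th:main}, $T$ from \Cref{th:inner_complexity}, and the product $K\times T$ evaluated under the case $\frac{L_p}{\mu_x}\geq \frac{L_q}{\mu_y}$ with exactly the same algebra ($\eta_x = \frac{1}{3\sqrt{L_p\mu_x}}$, $L_R\sqrt{\eta_x\eta_y}=\frac{L_R}{3\sqrt{L_p\mu_y}}$, symmetry for the other case). If anything, your treatment of the two delicate points --- converting the weighted bound of \Cref{th:main} into the Euclidean $\varepsilon$-solution, and justifying $\log\frac{1}{\gamma}=\mathcal{O}\left(\log\frac{L_R}{\min\{\mu_x,\mu_y\}}\right)$ despite the self-referential definition \eqref{gamma} --- is more careful than the paper, which performs these substitutions without comment.
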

\textbf{SPP with one composite.}
The important particular case of problem \eqref{problem:main} is composite saddle point problem with one composite. It means that in this case $L_q = 0$ (or $L_p = 0$). By \Cref{th:oracle_complexity} \Cref{ae:alg} requires $\mathcal{O}\left(\sqrt{\frac{L_p}{\mu_x}}\log\frac{1}{\varepsilon}\right)$ oracle calls of $\nabla p(x)$ and $\mathcal{O}\left(\max\left\{\sqrt{\frac{L_p}{\mu_x}}, \frac{L_R}{\sqrt{\mu_x \mu_y}}\right\}\log\frac{1}{\varepsilon}\right)$ oracle calls of $\nabla R(x,y)$ to find an $\varepsilon$-solution to problem \eqref{problem:main}.
\subsection{Convex-concave and strongly convex-concave composite SPP}
For the convex-concave composite SPP we assume that $\mu_x = \mu_y = 0$ that means $R(x, y)$ is convex-concave. For the strongly convex-concave composite SPP we assume that $\mu_x > \mu_y = 0$ that means $R(x, y)$ is strongly convex-concave. To present the results for these problems we make standard assumption that solution $(x^*, y^*)$ is limited, i.e. $\|x^*\|\leq \mathcal{D}_x$, $\|y^*\|\leq \mathcal{D}_y$. We use this assumption and consider problem \eqref{problem:main} with regularization terms. For strongly convex-concave case we regularize function $q(y)$ and consider the problem
\begin{equation}\label{problem:strongly-convex_concave}
\min_{x \in \mathbb{R}^{d_x}}\max_{y \in \mathbb{R}^{d_y}} \left\{p(x) + R(x,y) - q(y) -  \frac{\varepsilon}{12\mathcal{D}_y^2}\|y\|^2\right\}
\end{equation}
instead of the problem \eqref{problem:main}. For the convex-concave case we also add the regularization terms for the functions $p(x)$ and $q(y)$ and consider the problem \begin{equation}\label{problem:convex_concave}
\min_{x \in \mathbb{R}^{d_x}}\max_{y \in \mathbb{R}^{d_y}} \left\{p(x) + \frac{\varepsilon}{16\mathcal{D}_x^2}\|x\|^2 + R(x,y) - q(y) -  \frac{\varepsilon}{16\mathcal{D}_y^2}\|y\|^2\right\}
\end{equation}
instead of the problem \eqref{problem:main}.
To demonstrate the equivalence of problems \eqref{problem:strongly-convex_concave}, \eqref{problem:convex_concave} with regularisation terms to problem \eqref{problem:main} we present the following lemma. 
\begin{lemma}\label{lema:convex-concave}
Consider problem \eqref{problem:main} under \Crefrange{ass:p}{ass:R}. If $\mu_x > 0$, $\mu_y = 0$ (strongly convex-concave case) and  $(\hat{x}, \hat{y})$ is an $\frac{2\varepsilon}{3}$-solution to the problem \eqref{problem:strongly-convex_concave}
or if $\mu_x = 0$, $\mu_y = 0$ (convex-concave case) and $(\hat{x}, \hat{y})$ is an $\frac{\varepsilon}{2}$-solution to the problem
\eqref{problem:convex_concave}
with $\|x^*\|\leq \mathcal{D}_x$, $\|y^*\|\leq \mathcal{D}_y$. Then, $(\hat{x}, \hat{y})$ is an $\varepsilon$-solution to problem \eqref{problem:main}. 
\end{lemma}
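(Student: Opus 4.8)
The plan is to treat this as a standard regularization-reduction argument, measuring solution quality by the (ball-restricted) duality gap of the saddle function $\Phi(x,y) := p(x) + R(x,y) - q(y)$. Write $\Phi_{\mathrm{reg}}$ for the regularized saddle function: in the strongly convex-concave case ($\mu_x>0=\mu_y$) we have $\Phi_{\mathrm{reg}}(x,y) = \Phi(x,y) - \frac{\varepsilon}{12\mathcal{D}_y^2}\|y\|^2$ from \eqref{problem:strongly-convex_concave}, and in the convex-concave case ($\mu_x=\mu_y=0$) we have $\Phi_{\mathrm{reg}}(x,y) = \Phi(x,y) + \frac{\varepsilon}{16\mathcal{D}_x^2}\|x\|^2 - \frac{\varepsilon}{16\mathcal{D}_y^2}\|y\|^2$ from \eqref{problem:convex_concave}. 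The first observation is that each added quadratic is nonnegative and, on the ball of radius $\mathcal{D}_x$ in $x$ and $\mathcal{D}_y$ in $y$ that contains the saddle point by the hypotheses $\|x^*\|\le\mathcal{D}_x$, $\|y^*\|\le\mathcal{D}_y$, is bounded by $\frac{\varepsilon}{12}$ (first case) or by $\frac{\varepsilon}{16}$ each (second case). This is the only place the constants $\mathcal{D}_x,\mathcal{D}_y$ enter.

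Next I would compare the two gaps. For a candidate $(\hat{x},\hat{y})$ I split $\sup_{\|y\|\le\mathcal{D}_y}\Phi(\hat{x},y)$ and $\inf_{\|x\|\le\mathcal{D}_x}\Phi(x,\hat{y})$ by adding and subtracting the regularizers. The $y$-regularizer enters the supremum with a $+\frac{\varepsilon}{12\mathcal{D}_y^2}\|y\|^2\le\frac{\varepsilon}{12}$ correction on the ball, while in the convex-concave case the $x$-regularizer enters the infimum with a matching $-\frac{\varepsilon}{16\mathcal{D}_x^2}\|x\|^2\ge-\frac{\varepsilon}{16}$ correction; dropping the remaining sign-favourable terms yields an inequality of the form $\mathrm{Gap}_{\Phi}(\hat{x},\hat{y}) \le \mathrm{Gap}_{\Phi_{\mathrm{reg}}}(\hat{x},\hat{y}) + c\varepsilon$, with $c=\frac{1}{12}$ in the first case and $c=\frac{1}{8}$ in the second. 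Crucially, because $\Phi_{\mathrm{reg}}$ is strongly concave in $y$ and $\mu_x$- (resp. strongly) convex in $x$, its supremum over $y$ and infimum over $x$ are finite and attained, so passing from the ball-restricted gap of $\Phi$ to the unrestricted gap of $\Phi_{\mathrm{reg}}$ is legitimate (enlarging the feasible set raises the $\sup$ and lowers the $\inf$, i.e. in the favourable direction).

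Finally I would invoke the hypothesis that $(\hat{x},\hat{y})$ is a $\frac{2\varepsilon}{3}$- (resp. $\frac{\varepsilon}{2}$-) solution of the regularized problem, i.e. $\mathrm{Gap}_{\Phi_{\mathrm{reg}}}(\hat{x},\hat{y})\le\frac{2\varepsilon}{3}$ (resp. $\le\frac{\varepsilon}{2}$), and add the correction from the previous step: $\frac{2\varepsilon}{3}+\frac{\varepsilon}{12}=\frac{3\varepsilon}{4}\le\varepsilon$ and $\frac{\varepsilon}{2}+\frac{\varepsilon}{8}=\frac{5\varepsilon}{8}\le\varepsilon$, which closes both cases simultaneously (with a little slack, consistent with the conservative budgets stated).

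The main obstacle I anticipate is the bookkeeping around the unbounded domains: the original (strongly) convex-concave problem need not have a finite $\sup_y$ or $\inf_x$, so the quality measure must be the gap restricted to the ball guaranteed to contain the solution, and one must check that restricting $\Phi$ while leaving $\Phi_{\mathrm{reg}}$ unrestricted only helps the inequality. A secondary care point is ensuring the regularized optimizers stay controlled so that the two gaps genuinely compare the same quantity (this is exactly where the strong convexity/concavity injected by the regularization is used); if instead the regularized accuracy is furnished in distance-to-solution form by \Cref{th:main}, an additional smoothness-based distance-to-gap conversion must be inserted before the comparison. The constant tracking that produces precisely the coefficients $\frac{1}{12},\frac{1}{16}$ and the accuracy budgets $\frac{2\varepsilon}{3},\frac{\varepsilon}{2}$ is then routine.
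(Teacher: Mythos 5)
A preliminary remark: the paper never actually proves \Cref{lema:convex-concave} (its appendix contains only the proofs of \Cref{th:main} and \Cref{th:inner_complexity}), so your proposal cannot be compared against an official argument; it has to be judged against the way the lemma is defined and used in the text. Read as a statement about duality gaps on both sides, your proof is correct: the comparison inequality obtained by adding and subtracting the regularizers, together with their bounds $\frac{\varepsilon}{12}$ (resp.\ $\frac{\varepsilon}{16}$ each) on the balls containing $(x^*,y^*)$, gives $\frac{2\varepsilon}{3}+\frac{\varepsilon}{12}=\frac{3\varepsilon}{4}\le\varepsilon$ and $\frac{\varepsilon}{2}+\frac{\varepsilon}{8}=\frac{5\varepsilon}{8}\le\varepsilon$, and the restricted/unrestricted sup--inf directions all favor the inequality. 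Moreover, reading the \emph{conclusion} as a gap bound is forced: under the paper's literal definition ($\|\hat x-x^*\|^2+\|\hat y-y^*\|^2\le\varepsilon$) the lemma is false, e.g.\ for $p=q=0$, $R(x,y)=\frac{\mu_x}{2}\|x\|^2$, where every $(0,y^*)$ with $\|y^*\|\le\mathcal{D}_y$ solves \eqref{problem:main} but the unique solution of \eqref{problem:strongly-convex_concave} is the origin, so even the exact regularized solution violates the distance-based conclusion.

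The genuine gap is precisely the item you defer as ``a secondary care point'': in the paper's framework the \emph{hypothesis} cannot be gap-based. A $\frac{2\varepsilon}{3}$-solution of \eqref{problem:strongly-convex_concave} means, by the paper's only definition and by what \Cref{th:main} delivers when \Cref{ae:alg} is run on the regularized problem, a squared-distance bound to the regularized saddle point. Converting squared distance into a gap goes through the smoothness of $\max_y\Phi_{\mathrm{reg}}(\cdot,y)$ and $\min_x\Phi_{\mathrm{reg}}(x,\cdot)$, whose constants contain $L^2/\mu_y^{\mathrm{reg}}$ with $\mu_y^{\mathrm{reg}}=\Theta(\varepsilon/\mathcal{D}_y^2)$; a squared distance of order $\varepsilon$ therefore yields only a gap of order $L^2\mathcal{D}_y^2$, not $O(\varepsilon)$. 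This loss is real, not an artifact of the estimate: for $p=q=0$ and $R(x,y)=\frac{\mu_x}{2}x^2+xy$ in one dimension, both \eqref{problem:main} and \eqref{problem:strongly-convex_concave} have their saddle at the origin, the point $(0,\sqrt{2\varepsilon/3})$ is a $\frac{2\varepsilon}{3}$-solution (in distance) of the regularized problem, yet its restricted gap for the original problem equals $\frac{\varepsilon}{3\mu_x}>\varepsilon$ whenever $\mu_x<\frac{1}{3}$ (taking $\mathcal{D}_x$ large enough). So with the budgets as stated, the distance-to-gap conversion you postpone cannot be carried out at all; a correct version of the lemma must demand squared-distance accuracy of order $\varepsilon^2/(L^2\mathcal{D}_y^2)$ on the regularized problem --- harmless for the complexity results, since accuracy enters only logarithmically, but supplying that conversion is the actual mathematical content the lemma needs, not routine constant tracking.
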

Due to this lemma we need to find an $\frac{2\varepsilon}{3}$-solution to the problem \eqref{problem:strongly-convex_concave} or an $\frac{\varepsilon}{2}$-solution to the problem \eqref{problem:convex_concave}. To find them we apply \Cref{ae:alg} with composites $p(x)$, $q(y)$. By \Cref{th:oracle_complexity} \Cref{ae:alg} requires $\mathcal{O}\left(\max\left\{\sqrt{\frac{L_p}{\mu_x}}, \sqrt{\frac{L_q}{\varepsilon}}\mathcal{D}_y, \frac{L_R}{\sqrt{\mu_x\varepsilon}}\mathcal{D}_y\right\}\log \frac{L_R}{\min\{\mu_x, \mu_y\}}\log \frac{1}{\varepsilon}\right)$ oracle calls of $\nabla R(x,y)$ and $\mathcal{O}\left(\max\left\{\sqrt{\frac{L_p}{\mu_x}}, \sqrt{\frac{L_q}{\varepsilon}}\mathcal{D}_y\right\}\log \frac{1}{\varepsilon}\right)$ oracle calls of $\nabla p(x), \nabla q(y)$ to find an $\varepsilon$-solution to the problem  \eqref{problem:main} in the strongly convex-concave case and $\mathcal{O}\left(\max\left\{\sqrt{\frac{L_p}{\varepsilon}}\mathcal{D}_x, \sqrt{\frac{L_q}{\varepsilon}}\mathcal{D}_y, \frac{L_R}{\varepsilon}\mathcal{D}_x \mathcal{D}_y\right\}\log \frac{L_R}{\min\{\mu_x, \mu_y\}}\log \frac{1}{\varepsilon}\right)$ oracle calls of $\nabla R(x,y)$ and $\mathcal{O}\left(\max\left\{\sqrt{\frac{L_p}{\varepsilon}}\mathcal{D}_x, \sqrt{\frac{L_q}{\varepsilon}}\mathcal{D}_y\right\}\log \frac{1}{\varepsilon}\right)$ oracle calls of $\nabla p(x), \nabla q(y)$ to find an $\varepsilon$-solution to \eqref{problem:main} in the convex-concave case.

\section{Bilinear Saddle Point Problems}
In the special case, when $R(x,y) = x^T B y$,  \eqref{problem:main} has been also widely studied, dating at least to the classic work of \cite{chambolle2011first} (imaging inverse problems). Modern applications can be find in decentralized optimization \cite{rogozin2023decentralized,chezhegov2023decentralized}. Quadratic variant of the problem \eqref{problem:main} also appeared in
reinforcement learning \cite{du2017stochastic}. In this section we presents our results for bilinear saddle point problems.  

\subsection{Strongly convex-strongly concave bilinear SPP} The bilinear strongly convex-strongly concave  problem has the following form
\begin{equation}\label{problem:bilinear}
    \min_{x \in \mathbb{R}^{d_x}}\max_{y \in \mathbb{R}^{d_y}} p(x) + x^T B y - q(y).
\end{equation}
To this problem we assume that the following assumptions hold
\begin{assumption}\label{ass:bilinear:p}
   $p(x): \mathbb{R}^{d_x} \to \mathbb{R}$ is $L_p$-smooth and $\mu_p$-strongly convex function 
\end{assumption}
\begin{assumption}
  $q(y): \mathbb{R}^{d_y} \to \mathbb{R}$ is $L_q$-smooth and $\mu_q$-strongly convex function  
\end{assumption}
\begin{assumption}\label{ass:bilinear:B}
   Matrix $B : \mathbb{R}^{d_x} \times \mathbb{R}^{d_y}$ is positive semi-definite. 
\end{assumption}

To apply \Cref{ae:alg} to the problem \eqref{problem:bilinear} we reformulate it as a problem 
\begin{equation*}
    \min_x \max_y \tilde p(x) + \frac{\mu_p}{2}\|x\|^2 + x^T B y - \frac{\mu_q}{2}\|y\|^2 - \tilde q(y)
\end{equation*}
with composites $\tilde p(x) = p(x) - \frac{\mu_p}{2}\|x\|^2$, $\tilde q(y) = q(y) - \frac{\mu_y}{2}\|y\|^2$. 

\textbf{Auxiliary subproblem complexity.} At each iteration of \Cref{ae:alg} we need to find a $\gamma$-solution to the problem
\begin{equation}\label{pr:inner_bilinear}
    \min_x \max_y \langle\nabla \tilde p(x_g^k),x\rangle + \frac{1}{2\eta_x}\|x - x^k\|^2 + \frac{\mu_p}{2}\|x\|^2 + x^T B y - \frac{\mu_q}{2}\|y\|^2 - \frac{1}{2\eta_y}\|y - y^k\|^2 - \langle\nabla \tilde q(y_g^k),y\rangle
\end{equation} 
with $\gamma$ defined in \eqref{gamma}. The simplest way to solve this problem is reformulate it as a minimization problem in $x$ using the first order optimal condition in $y$:
\begin{align*}
    B^Tx &- \mu_q y - \frac{1}{\eta_y}(y-y^k) - \nabla \tilde q (y_g^k) = 0
    \\ y(x) &= \frac{1}{\frac{1}{\eta_y} + \mu_q}\left(B^Tx - \nabla \tilde q(y_g^k) + \frac{1}{\eta_y}y^k\right). 
\end{align*} After reformulation we get the quadratic problem 
\begin{align*}
    \min_x \langle x, Ax \rangle + \langle b, x\rangle + c
\end{align*}
with 
\begin{align*}
    A &= \frac{1}{2}\left(\left(\frac{1}{\eta_x} + \mu_p\right)\left(\frac{1}{\eta_y} + \mu_q\right) I + B B^T \right),
    \\ b &= \nabla \tilde p(x_g^k) - \frac{1}{\eta_y}x^k + \left(1 - \frac{2\eta_y}{1 + \eta_y\mu_q}\right)B\left(\nabla \tilde q(y_g^k) - \frac{1}{\eta_y}y^k\right),
    \\ c &= \frac{\eta_y}{2(1 + \eta_y \mu_q)}\|\nabla \tilde q(y_g^k)\|^2 - \frac{1}{1 + \mu_q \eta_y} \langle \nabla \tilde q(y_g^k), y^k \rangle + \frac{\mu_q(1 - \eta_y \mu_q)}{2(1 + \eta_y \mu_q)^2}\|y^k\|^2.
\end{align*}  This problem can be solved by Nesterov's Accelerated Gradient Descent that requires
\begin{align*}
    T &= \mathcal{O}\left( \sqrt{\frac{\lambda_{\max}\left(A\right)}{\lambda_{\min}\left(A\right)}} \log \frac{1}{\gamma}\right) = \mathcal{O}\left(\sqrt{\frac{\left(\frac{1}{\eta_x} + \mu_p\right)\left(\frac{1}{\eta_y} + \mu_q\right) + \lambda_{\max}\left(BB^T\right)}{\left(\frac{1}{\eta_x} + \mu_p\right)\left(\frac{1}{\eta_y} + \mu_q\right) + \lambda_{\min}\left(BB^T\right)}} \log \frac{1}{\gamma}\right)
    \\&= \mathcal{O}\left( \min \left\{ \sqrt{\frac{ \lambda_{\max}\left(BB^T\right)}{ \lambda_{\min}\left(BB^T\right)}}, \sqrt{1 + \frac{\lambda_{\max}\left(BB^T\right)}{\left(\frac{1}{\eta_x} + \mu_p\right)\left(\frac{1}{\eta_y} + \mu_q\right)}}\right\}\log \frac{\sqrt{\lambda_{\max}(BB^T)}}{\min\{\mu_p, \mu_q\}}\right)
\end{align*}
iterations or calls of oracles $B / B^T$ to find an $\varepsilon$-solution to \eqref{pr:inner_bilinear}.

\textbf{Overall complexity.} Next, we make some computations to get the overall complexity of \Cref{ae:alg} for bilinear case
\begin{align*}
    K \times T = \mathcal{O}\left(\left(1 + \sqrt{\frac{L_p}{\mu_x}} + \sqrt{\frac{L_q}{\mu_y}} \right)\log \frac{1}{\varepsilon}\right) \times 
 \mathcal{O}\left( \min\left\{T_1, T_2\right\}\log \frac{\sqrt{\lambda_{\max}(BB^T)}}{\min\{\mu_p, \mu_q\}}\right)
\end{align*}
where $T_1 = \sqrt{\frac{\lambda_{\max}(BB^T)}{\lambda_{\min}(BB^T)}}$ and $T_2 = \sqrt{1 + \frac{\lambda_{\max}\left(BB^T\right)}{\left(\frac{1}{\eta_x} + \mu_p\right)\left(\frac{1}{\eta_y} + \mu_q\right)}}$. Next we compute $K \times T_1$ and $K \times T_2$ for case $\frac{L_p}{\mu_p} \geq \frac{L_q}{\mu_q}$.
\begin{align*}
    K \times T_1 = \mathcal{O}\left(    \sqrt{\frac{L_p}{\mu_p}}\log \frac{1}{\varepsilon} \times \sqrt{\frac{\lambda_{\max}(BB^T)}{\lambda_{\min}(BB^T)}} \right)
    = \mathcal{O}\left(   \sqrt{\frac{L_p}{\mu_p}} \sqrt{\frac{\lambda_{\max}(BB^T)}{\lambda_{\min}(BB^T)}}  \log \frac{1}{\varepsilon}\right),
\end{align*}
\begin{align*}
    K \times T_2 &= \mathcal{O}\left( \sqrt{\frac{L_p}{\mu_p}}\log \frac{1}{\varepsilon} \times \sqrt{1 + \frac{\lambda_{\max}\left(BB^T\right)}{\left(\frac{1}{\eta_x} + \mu_p\right)\left(\frac{1}{\eta_y} + \mu_q\right)}}\right)
    \\&= \mathcal{O}\left( \sqrt{\frac{L_p}{\mu_p}}\log \frac{1}{\varepsilon} \times \left(1 + \sqrt{\lambda_{\max}(BB^T)\eta_x \eta_y}\right)\right)
    \\& = \mathcal{O}\left(\left( \sqrt{\frac{L_p}{\mu_p}} + \sqrt{\frac{L_p\lambda_{\max}(BB^T)}{\mu_q}}\eta_x \right)\log \frac{1}{\varepsilon}\right)\\& = \mathcal{O}\left(\left(\sqrt{\frac{L_p}{\mu_p}} + \sqrt{\frac{\lambda_{\max}(BB^T)}{\mu_p\mu_q}} \right)\log \frac{1}{\varepsilon}\right).
\end{align*}
The case $\frac{L_q}{\mu_q} \geq \frac{L_p}{\mu_p}$ is done similarly. These calculations allow us to formulate the following theorem about oracle complexities of \Cref{ae:alg} applied to the problem \eqref{problem:bilinear}.
\begin{theorem}\label{th:oracle_complexity_bilinear}
Consider Problem~\eqref{problem:bilinear} under  \Crefrange{ass:bilinear:p}{ass:bilinear:B}. Then, to find an $\varepsilon$-solution,  \Cref{ae:alg} requires
\begin{equation*}
	\mathcal{O}\left(\max\left\{1, \sqrt{\frac{L_p}{\mu_p}}, \sqrt{\frac{L_q}{\mu_q}}\right\}\log\frac{1}{\varepsilon}\right) ~~ \text{calls of }\,\, \nabla p(x), \nabla q(y)
\end{equation*}
and 
\begin{equation*}
    \mathcal{O}\left(\min\left\{K_1, K_2\right\}\log \frac{\sqrt{\lambda_{\max}(BB^T)}}{\min\{\mu_p, \mu_q\}}\log\frac{1}{\varepsilon}\right) ~~ \text{calls of }\,\, B \ \text{or} \ \ B^T,
\end{equation*}
where 
\begin{equation*}
    K_1 = \max \left\{\sqrt{\frac{L_p\lambda_{\max}(BB^T)}{\mu_p\lambda_{\min}(BB^T)}}, \sqrt{\frac{L_q\lambda_{\max}(BB^T)}{\mu_q\lambda_{\min}(BB^T)}}\right\}
\end{equation*}
and
\begin{equation*}
    K_2 = \max \left\{\sqrt{\frac{L_p}{\mu_p}}, \sqrt{\frac{L_q}{\mu_q}}, \sqrt{\frac{\lambda_{\max}(BB^T)}{\mu_p\mu_q}}\right\}.
\end{equation*}
\end{theorem}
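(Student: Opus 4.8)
The plan is to treat \Cref{th:oracle_complexity_bilinear} as a specialization of \Cref{th:main} in which the inner saddle subproblem \eqref{problem:aux} is quadratic and can be solved by accelerated gradient descent rather than by the generic inner solver of \Cref{th:inner_complexity}. First I would record the reformulation already stated above: writing $R(x,y)=\frac{\mu_p}{2}\|x\|^2+x^TBy-\frac{\mu_q}{2}\|y\|^2$ and $\tilde p(x)=p(x)-\frac{\mu_p}{2}\|x\|^2$, $\tilde q(y)=q(y)-\frac{\mu_q}{2}\|y\|^2$, one checks that $\tilde p,\tilde q$ are convex and $L_p$-, $L_q$-smooth (subtracting $\frac{\mu}{2}\|\cdot\|^2$ from a $\mu$-strongly convex, $L$-smooth function leaves an $L$-smooth convex function), and that $R$ satisfies \Cref{ass:R} with $\mu_x=\mu_p$, $\mu_y=\mu_q$ and $L_R=\mathcal{O}(\sqrt{\lambda_{\max}(BB^T)}+\max\{\mu_p,\mu_q\})$ (its Hessian is $\left(\begin{smallmatrix}\mu_p I & B\\ B^T & -\mu_q I\end{smallmatrix}\right)$, whose spectral norm is at most $\max\{\mu_p,\mu_q\}+\|B\|$). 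Hence \Cref{th:main} applies verbatim and yields the outer iteration count $K=\mathcal{O}(\max\{1,\sqrt{L_p/\mu_p},\sqrt{L_q/\mu_q}\}\log\tfrac1\varepsilon)$. Since $\nabla p,\nabla q$ enter only in \cref{ae:line:3}, once per outer iteration, this already gives the claimed $\nabla p,\nabla q$ complexity.

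The second step is to bound the per-iteration cost of solving \eqref{pr:inner_bilinear}. As the inner objective is $(\tfrac1{\eta_y}+\mu_q)$-strongly concave in $y$, I would eliminate $y$ through its first-order condition to obtain $y(x)$ in closed form, reducing the saddle problem to the strongly convex quadratic $\min_x\langle x,Ax\rangle+\langle b,x\rangle+c$ with $A=\tfrac12((\tfrac1{\eta_x}+\mu_p)(\tfrac1{\eta_y}+\mu_q)I+BB^T)$ displayed above; the elimination only rescales the objective and so preserves $\kappa(A)$. Accelerated gradient descent on this quadratic costs $T=\mathcal{O}(\sqrt{\kappa(A)}\log\tfrac1\gamma)$ calls of $B/B^T$, and I would bound $\kappa(A)=\frac{c_{xy}+\lambda_{\max}(BB^T)}{c_{xy}+\lambda_{\min}(BB^T)}$, where $c_{xy}=(\tfrac1{\eta_x}+\mu_p)(\tfrac1{\eta_y}+\mu_q)$, by two monotone estimates: $\kappa(A)\le\lambda_{\max}(BB^T)/\lambda_{\min}(BB^T)=T_1^2$ (adding $c_{xy}\ge0$ to both parts of a ratio exceeding one only decreases it) and $\kappa(A)\le1+\lambda_{\max}(BB^T)/c_{xy}=T_2^2$ (using $\lambda_{\min}(BB^T)\ge0$), so that $\sqrt{\kappa(A)}\le\min\{T_1,T_2\}$; note $T_2$ stays finite even when $B$ is singular. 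The accuracy $\gamma$ needed for \eqref{aux:grad_app} is the one in \eqref{gamma}, whose reciprocal is polynomial in $L_R/\min\{\mu_p,\mu_q\}$ exactly as in the derivation preceding \Cref{th:oracle_complexity}, so $\log\tfrac1\gamma=\mathcal{O}(\log\tfrac{\sqrt{\lambda_{\max}(BB^T)}}{\min\{\mu_p,\mu_q\}})$.

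Finally I would multiply, using $K\times\min\{T_1,T_2\}=\min\{KT_1,KT_2\}$. In the case $\tfrac{L_p}{\mu_p}\ge\tfrac{L_q}{\mu_q}$ the tuning \eqref{ae:tunung_1} gives $\alpha\asymp\sqrt{\mu_p/L_p}$, $\eta_x\asymp\tfrac1{\sqrt{L_p\mu_p}}$ and $\eta_y=\tfrac{\mu_p}{\mu_q}\eta_x$, whence $K=\sqrt{L_p/\mu_p}$, $KT_1=\sqrt{\tfrac{L_p\lambda_{\max}(BB^T)}{\mu_p\lambda_{\min}(BB^T)}}=K_1$, and $\eta_x\eta_y\asymp\tfrac1{L_p\mu_q}$ makes $KT_2=\sqrt{L_p/\mu_p}(1+\sqrt{\lambda_{\max}(BB^T)\eta_x\eta_y})\asymp\max\{\sqrt{L_p/\mu_p},\sqrt{\lambda_{\max}(BB^T)/(\mu_p\mu_q)}\}=K_2$; the case $\tfrac{L_q}{\mu_q}\ge\tfrac{L_p}{\mu_p}$ is symmetric under \eqref{ae:tunung_2}. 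I expect the main obstacle to be precisely this last collapse: one must substitute the explicit step sizes and check that the surviving term $K\cdot\sqrt{\lambda_{\max}(BB^T)\eta_x\eta_y}$ cancels the $L_p$ dependence to leave $\sqrt{\lambda_{\max}(BB^T)/(\mu_p\mu_q)}$, while simultaneously confirming that $\max\{1,\sqrt{L_p/\mu_p},\sqrt{L_q/\mu_q}\}$ collapses to the dominant branch in each regime, so that $KT_1$ and $KT_2$ equal the stated $K_1,K_2$ exactly rather than merely matching one of their two entries.
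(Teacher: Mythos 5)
Your proposal is correct and follows essentially the same route as the paper: the same reformulation with composites $\tilde p(x)=p(x)-\frac{\mu_p}{2}\|x\|^2$, $\tilde q(y)=q(y)-\frac{\mu_q}{2}\|y\|^2$, the same elimination of $y$ to reduce the inner problem \eqref{pr:inner_bilinear} to a strongly convex quadratic in $x$ solved by Nesterov's accelerated gradient descent, the same two condition-number bounds $T_1,T_2$, and the same multiplication $K\times\min\{T_1,T_2\}$ with the case split on $\frac{L_p}{\mu_p}$ versus $\frac{L_q}{\mu_q}$ collapsing to $K_1,K_2$. The extra details you supply (verifying $L_R=\mathcal{O}(\sqrt{\lambda_{\max}(BB^T)}+\max\{\mu_p,\mu_q\})$ for the reformulated saddle term, and noting that the $y$-elimination only rescales the quadratic and hence preserves $\kappa(A)$) are points the paper glosses over, not deviations from its argument.
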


\subsection{Affinely constrained minimization}
This problem has the following form:
\begin{equation}
    \min_{\mathbf{B}x = c} p(x),
\end{equation}
where $c \in \text{range} \mathbf{B}$. Also, $p(x)$ is $\mu_p$-strongly convex function and $\mathbf{B}$ is positive definite $(\text{i. e. }\lambda_{\min}(\mathbf{B}\mathbf{B}^T) > 0)$. This problem is equivalent to saddle point problem:
\begin{equation}\label{pr:bilinear_aff_constr}
    \min_x \max_y p(x) + x^T \mathbf{B} y - y^T c.
\end{equation}
To apply \Cref{ae:alg} to this problem we make regularization and get the following problem 
\begin{equation*}
    \min_x \max_y p(x) + x^T \mathbf{B} y - y^T c - \frac{\varepsilon}{16 \mathcal{D}_y^2}\|y\|^2.
\end{equation*}
By \Cref{lema:convex-concave}, if we find $\frac{2\varepsilon}{3}$-solution to this problem, then we find an $\varepsilon$-solution to \eqref{pr:bilinear_aff_constr}. To find this solution we apply \Cref{ae:alg} and get the following complexity.
\begin{corollary}\label{th:oracle_complexity_bilinear_aff_constr}
Consider Problem~\eqref{pr:bilinear_aff_constr}. Then, to find an $\varepsilon$-solution,  \Cref{ae:alg} requires
\begin{equation*}
	\mathcal{O}\left(\max\left\{1, \sqrt{\frac{L_p}{\mu_p}}\right\}\log\frac{1}{\varepsilon}\right) ~~ \text{calls of }\,\, \nabla p(x)
\end{equation*}
and 
\begin{equation*}
    \mathcal{O}\left(\sqrt{\frac{L_p\lambda_{\max}(BB^T)}{\mu_p\lambda_{\min}(BB^T)}} \log^2\frac{1}{\varepsilon}\right) ~~ \text{calls of }\,\, B \ \text{or} \ \ B^T.
\end{equation*}
\end{corollary}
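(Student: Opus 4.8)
The plan is to derive \Cref{th:oracle_complexity_bilinear_aff_constr} as a specialization of the bilinear complexity bound of \Cref{th:oracle_complexity_bilinear}, applied to the regularized saddle point reformulation. First I would cast $\min_{\mathbf{B}x=c}p(x)$ as the saddle problem \eqref{pr:bilinear_aff_constr}, which is $\mu_p$-strongly convex in $x$ but only \emph{linear}, hence merely concave, in $y$. Adding the regularizer $-\frac{\varepsilon}{16\mathcal{D}_y^2}\|y\|^2$ turns this into a strongly convex–strongly concave bilinear problem to which \Cref{ae:alg} applies, with effective strong concavity modulus $\mu_q=\frac{\varepsilon}{8\mathcal{D}_y^2}$ and linear composite $q(y)=y^Tc$, for which $L_q=0$. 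By \Cref{lema:convex-concave} in the strongly convex–concave case ($\mu_x=\mu_p>0=\mu_y$), a $\tfrac{2\varepsilon}{3}$-solution of the regularized problem is an $\varepsilon$-solution of \eqref{pr:bilinear_aff_constr}, so it suffices to bound the cost of \Cref{ae:alg} on the regularized problem to accuracy $\tfrac{2\varepsilon}{3}$.

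Next I would substitute $L_q=0$, the given $\mu_p$, and $\mu_q=\frac{\varepsilon}{8\mathcal{D}_y^2}$ into the two bounds of \Cref{th:oracle_complexity_bilinear}. For the composite oracle, $\sqrt{L_q/\mu_q}=0$, so $\max\{1,\sqrt{L_p/\mu_p},\sqrt{L_q/\mu_q}\}=\max\{1,\sqrt{L_p/\mu_p}\}$, and the target accuracy $\tfrac{2\varepsilon}{3}$ contributes only $\log\frac{1}{\varepsilon}$, giving exactly the claimed count of $\nabla p$ calls. For the matrix oracle, the key point is that in the minimum $\min\{K_1,K_2\}$ of \Cref{th:oracle_complexity_bilinear}, the second branch $K_2=\max\{\sqrt{L_p/\mu_p},\sqrt{\lambda_{\max}(BB^T)/(\mu_p\mu_q)}\}$ contains $\mu_q=\Theta(\varepsilon)$ in a denominator and therefore blows up as $\varepsilon\to0$, whereas $K_1$ is independent of $\mu_q$; with $L_q=0$ it collapses to $K_1=\sqrt{\frac{L_p\lambda_{\max}(BB^T)}{\mu_p\lambda_{\min}(BB^T)}}$. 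Thus the minimum automatically selects $K_1$, which is the leading factor in the target bound.

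It remains to account for the logarithmic factors. The prefactor $\log\frac{\sqrt{\lambda_{\max}(BB^T)}}{\min\{\mu_p,\mu_q\}}$ in \Cref{th:oracle_complexity_bilinear} becomes, after substituting $\mu_q=\frac{\varepsilon}{8\mathcal{D}_y^2}$ and using $\min\{\mu_p,\mu_q\}=\mu_q$ for small $\varepsilon$, equal to $\log\frac{8\mathcal{D}_y^2\sqrt{\lambda_{\max}(BB^T)}}{\varepsilon}=\mathcal{O}\!\left(\log\frac{1}{\varepsilon}\right)$, treating $\mathcal{D}_y$ and the spectrum of $\mathbf{B}$ as constants. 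Multiplying this extra $\log\frac{1}{\varepsilon}$ by the outer $\log\frac{1}{\varepsilon}$ from the iteration count of \Cref{ae:alg} yields the $\log^2\frac{1}{\varepsilon}$ in the matrix-oracle bound, completing the argument. I expect the main obstacle to be precisely this $\varepsilon$-dependence of the regularization modulus $\mu_q$: one must verify that it cancels out of $K_1$ so that the matrix complexity stays $\varepsilon$-free up to logarithms, while the same $\mu_q$ simultaneously (i) rules out the $K_2$ branch of the minimum and (ii) supplies the second logarithmic factor through the prefactor. Keeping straight where $\mu_q$ enters, and where it must not, is the delicate part of the bookkeeping.
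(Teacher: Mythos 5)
Your proposal is correct and follows essentially the same route as the paper: regularize \eqref{pr:bilinear_aff_constr} with the $-\frac{\varepsilon}{16\mathcal{D}_y^2}\|y\|^2$ term, invoke \Cref{lema:convex-concave} to reduce to a $\frac{2\varepsilon}{3}$-solution of the regularized problem, and then specialize \Cref{th:oracle_complexity_bilinear} with $L_q$ vanishing so that the bound $\min\{K_1,K_2\}\leq K_1$ gives the $\varepsilon$-independent factor $\sqrt{L_p\lambda_{\max}(BB^T)/(\mu_p\lambda_{\min}(BB^T))}$, with the prefactor $\log\frac{\sqrt{\lambda_{\max}(BB^T)}}{\min\{\mu_p,\mu_q\}}=\mathcal{O}(\log\frac{1}{\varepsilon})$ supplying the second logarithm. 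The only cosmetic difference is that the paper justifies discarding $K_2$ by the trivial inequality $\min\{a,b\}\leq a$, whereas you argue the minimum genuinely selects $K_1$ because $K_2$ blows up as $\varepsilon\to 0$; both yield the same bound.
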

This corollary is derived from  \Cref{th:oracle_complexity_bilinear} and the fact that $\min \{a, b\} \leq a$.

\subsection{Bilinear problem with linear composites}
In this subsection we consider bilinear problem with linear composites:
\begin{equation}\label{problem:bilinear_lin_composites}
    \min_x \max_y x^T d + x^T \mathbf{B} y - y^T c,
\end{equation}
where matrix $\mathbf{B}$ is positive definite ($\lambda_{\min}(BB^T) = \lambda_{\min}^+(BB^T)$).
As in the previous subsection, we make the regularization to apply \Cref{ae:alg}. The problem \eqref{problem:bilinear_lin_composites} with regularization has the following form:
\begin{equation}\label{problem:bilinear_lin_composites_reg}
    \min_x \max_y \frac{\varepsilon}{16\mathcal{D}_x^2}\|x\|^2 + x^T d + x^T \mathbf{B} y - y^T c - \frac{\varepsilon}{16\mathcal{D}_y^2}\|y\|^2.
\end{equation}
We need to find an $\frac{\varepsilon}{2}$-solution to find an $\varepsilon$-solution to \eqref{problem:bilinear_lin_composites} by \Cref{lema:convex-concave}. To find it we apply \Cref{ae:alg} with the following complexity. 
\begin{corollary}\label{th:oracle_complexity_bilinear_lin_comp}
Consider Problem~\eqref{problem:bilinear_lin_composites}. Then, to find an $\varepsilon$-solution,  \Cref{ae:alg} requires
\begin{equation*}    
    \mathcal{O}\left(\sqrt{\frac{\lambda_{\max}(BB^T)}{\lambda_{\min}(BB^T)}}\log^2\frac{1}{\varepsilon}\right) ~~ \text{calls of }\,\, B \ \text{or} \ \ B^T.
\end{equation*}
\end{corollary}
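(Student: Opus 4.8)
The plan is to obtain \Cref{th:oracle_complexity_bilinear_lin_comp} as a direct specialization of \Cref{th:oracle_complexity_bilinear}, in the same spirit as \Cref{th:oracle_complexity_bilinear_aff_constr}, the only new ingredient being that \emph{both} composites are now linear. First I would invoke \Cref{lema:convex-concave} in the convex--concave regime ($\mu_x=\mu_y=0$): since $\|x^*\|\le \mathcal{D}_x$ and $\|y^*\|\le\mathcal{D}_y$, it suffices to find an $\frac{\varepsilon}{2}$-solution of the regularized problem \eqref{problem:bilinear_lin_composites_reg}. That problem is genuinely strongly convex--strongly concave, so \Cref{th:oracle_complexity_bilinear} applies to it directly.

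The crucial point is to read off the condition numbers induced by the regularization. In the bilinear template of \Cref{th:oracle_complexity_bilinear}, the composites of \eqref{problem:bilinear_lin_composites_reg} are the quadratics $p(x)=\frac{\varepsilon}{16\mathcal{D}_x^2}\|x\|^2+x^T d$ and $q(y)=\frac{\varepsilon}{16\mathcal{D}_y^2}\|y\|^2+y^T c$. Each has a Hessian equal to a positive multiple of the identity, so its smoothness and strong-convexity constants coincide: $L_p=\mu_p=\frac{\varepsilon}{8\mathcal{D}_x^2}$ and $L_q=\mu_q=\frac{\varepsilon}{8\mathcal{D}_y^2}$, whence $L_p/\mu_p=L_q/\mu_q=1$ (the purely linear parts $x^T d$, $y^T c$ contribute nothing to either constant). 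Substituting these unit ratios into $K_1,K_2$ collapses $K_1$ to $\sqrt{\lambda_{\max}(BB^T)/\lambda_{\min}(BB^T)}$, which is finite precisely because $\mathbf{B}$ is positive definite ($\lambda_{\min}(BB^T)>0$). Bounding $\min\{K_1,K_2\}\le K_1$, exactly as in \Cref{th:oracle_complexity_bilinear_aff_constr}, then controls the leading factor by $\sqrt{\lambda_{\max}(BB^T)/\lambda_{\min}(BB^T)}$.

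It remains to account for the $\varepsilon$-dependence hidden inside the logarithm of \Cref{th:oracle_complexity_bilinear}. Because $\min\{\mu_p,\mu_q\}=\frac{\varepsilon}{8\max\{\mathcal{D}_x^2,\mathcal{D}_y^2\}}=\Theta(\varepsilon)$, the factor $\log\frac{\sqrt{\lambda_{\max}(BB^T)}}{\min\{\mu_p,\mu_q\}}$ is itself $\mathcal{O}(\log\frac{1}{\varepsilon})$; multiplied by the remaining $\log\frac{1}{\varepsilon}$ (with $\frac{\varepsilon}{2}$ in place of $\varepsilon$, which is asymptotically irrelevant) this produces the advertised $\log^2\frac{1}{\varepsilon}$ and yields the stated $B$/$B^T$ complexity.

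I expect the one place requiring genuine care to be this last bookkeeping: confirming simultaneously that $L_p/\mu_p=L_q/\mu_q=1$, so that no residual condition-number factor survives in $K_1$, and that the single logarithm of \Cref{th:oracle_complexity_bilinear} becomes a squared logarithm here because the regularization forces $\mu_p,\mu_q\sim\varepsilon$. All other steps are a verbatim substitution into an already-established theorem.
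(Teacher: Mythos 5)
Your proposal is correct and takes essentially the same route as the paper: regularize via \eqref{problem:bilinear_lin_composites_reg}, invoke \Cref{lema:convex-concave} in the convex--concave case to reduce to an $\frac{\varepsilon}{2}$-solution, and specialize \Cref{th:oracle_complexity_bilinear} with $L_p/\mu_p = L_q/\mu_q = 1$, so that $K_1$ collapses to $\sqrt{\lambda_{\max}(BB^T)/\lambda_{\min}(BB^T)}$, $\min\{K_1,K_2\}\leq K_1$, and $\min\{\mu_p,\mu_q\}=\Theta(\varepsilon)$ turns the logarithmic prefactor into a second $\log\frac{1}{\varepsilon}$. Your bookkeeping of the constants ($L_p=\mu_p=\frac{\varepsilon}{8\mathcal{D}_x^2}$, $L_q=\mu_q=\frac{\varepsilon}{8\mathcal{D}_y^2}$) is exactly what the paper's (implicit) derivation relies on, so there is no gap.
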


\section{Discussion Our Results and Related Works}
In this section we discuss the lower bounds and compare iteration and oracle complexities results for \Cref{ae:alg} stated in \Cref{th:oracle_complexity} with related works. 

\subsection{Lower bounds}
 The lower bounds on iteration complexity to the \textit{strongly convex-strongly concave} problem \eqref{problem:main} is $\Omega\left(\max\left\{\sqrt{\frac{L_p}{\mu_x}}, \sqrt{\frac{L_q}{\mu_y}}, \frac{L_R}{\sqrt{\mu_x\mu_y}} \right\}\log\frac{1}{\varepsilon}\right)$. This result was presented in \cite{zhang2019lower}. The special case of the problem \eqref{problem:main} is 
 \begin{equation*}
     \min_x \max_y \frac{1}{2}\|x\|^2 + R(x,y) - \frac{1}{2}\|y\|^2.
 \end{equation*} This means that the lower bounds on the oracle calls of $\nabla R(x,y)$ is $\Omega\left(\frac{L_R}{\sqrt{\mu_x \mu_y}}\log\frac{1}{\varepsilon}\right)$.  Also, the problem \eqref{problem:main} has a special case 
 \begin{equation*}
     \min_x \max_y p(x) + \frac{\mu_x}{2}\|x\|^2 - q(y) - \frac{\mu_y}{2}\|y\|^2
 \end{equation*} that separate into two problems $\min_x p(x) + \frac{\mu_x}{2}\|x\|^2$ and $\max_y - q(y) - \frac{\mu_y}{2}\|y\|^2$. The lower bounds to these problems on oracle calls of $\nabla p(x), \nabla q(y)$ is  $\Omega\left(\max\left\{\sqrt{\frac{L_p}{\mu_x}}, \sqrt{\frac{L_q}{\mu_y}}\right\}\log\frac{1} {\varepsilon}\right)$ which was proposed in \cite{nesterov2018lectures}. 
 To sum up, the oracle complexities to the problem \eqref{problem:main} is $\Omega\left(\max\left\{\sqrt{\frac{L_p}{\mu_x}}, \sqrt{\frac{L_q}{\mu_y}}\right\}\log\frac{1} {\varepsilon}\right)$ oracle calls of $\nabla p(x), \nabla q(y)$ and $\Omega\left(\frac{L_R}{\sqrt{\mu_x \mu_y}}\log\frac{1}{\varepsilon}\right)$ oracle calls of $\nabla R(x,y)$. 
 
 For the \textit{bilinear strongly convex-strongly concave} problem \eqref{problem:bilinear} the lower bound on iteration complexity $\Omega\left(\max\left\{\sqrt{\frac{L_p}{\mu_p}}, \sqrt{\frac{L_q}{\mu_q}}, \frac{L_B}{\sqrt{\mu_p\mu_q}}\right\}\log\frac{1}{\varepsilon}\right)$ was also proposed in \cite{zhang2019lower}.
Problem 
\begin{equation*}
    \min_x\max_y p(x) + \sqrt{\mu_p \mu_q} \langle x, y \rangle - q(y)
\end{equation*} is a special case of \eqref{problem:bilinear} with $L_B = \sqrt{\mu_p \mu_q}$ and $B = \sqrt{\mu_p \mu_q}I$. It means that the lower bound on oracle calls of $\nabla p(x), \nabla q(y)$ to problem \eqref{problem:bilinear} is 
$\Omega\left(\max\left\{1,\sqrt{\frac{L_p}{\mu_p}}, \sqrt{\frac{L_q}{\mu_q}}\right\}\log\frac{1}{\varepsilon}\right)$.  Besides, problem 
\begin{equation*}
    \min_x \max_y \frac{\mu_p}{2}\|x\|^2 + x^TBy - \frac{\mu_q}{2}\|y\|^2
\end{equation*} also a special case of \eqref{problem:bilinear} that requires $\Omega\left(\max\left\{1, \frac{L_B}{\sqrt{\mu_p\mu_q}}\right\}\log\frac{1}{\varepsilon}\right)$ oracle calls of $B$ or $B^T$ to find an $\varepsilon$-solution to this problem. 

Using similar reasoning, the following lower bounds can be obtained for \textit{strongly convex-concave} problem \eqref{problem:main}:  $\Omega\left(\max\left\{\sqrt{\frac{L_p}{\mu_x}}, \sqrt{\frac{L_q}{\varepsilon}}\mathcal{D}_y, \frac{L_R}{\sqrt{\mu_x\varepsilon}}\mathcal{D}_y\right\}\log \frac{1}{\varepsilon}\right)$ oracle calls of $\nabla R(x,y)$ and $\Omega\left(\max\left\{\sqrt{\frac{L_p}{\mu_x}}, \sqrt{\frac{L_q}{\varepsilon}}\mathcal{D}_y\right\}\log \frac{1}{\varepsilon}\right)$ oracle calls of $\nabla p(x), \nabla q(y)$ to find an $\varepsilon$-solution.  In the \textit{convex-concave} case the lower bounds are $\Omega\left(\max\left\{\sqrt{\frac{L_p}{\varepsilon}}\mathcal{D}_x, \sqrt{\frac{L_q}{\varepsilon}}\mathcal{D}_y, \frac{L_R}{\varepsilon}\mathcal{D}_x \mathcal{D}_y\right\}\log \frac{1}{\varepsilon}\right)$ oracle calls of $\nabla R(x,y)$ and $\Omega\left(\max\left\{\sqrt{\frac{L_p}{\varepsilon}}\mathcal{D}_x, \sqrt{\frac{L_q}{\varepsilon}}\mathcal{D}_y\right\}\log \frac{1}{\varepsilon}\right)$ oracle calls of $\nabla p(x), \nabla q(y)$. For \textit{affinely constrained minimization} problem \eqref{pr:bilinear_aff_constr} the lower bound on oracle calls of $\nabla p(x)$ is $\Omega\left(\sqrt{\frac{L_p}{\mu_p}}\log\frac{1}{\varepsilon}\right)$ and the lower bound on calls of $B / B^T$ is $\Omega\left(\sqrt{\frac{L_p\lambda_{\max}(BB^T)}{\mu_p\lambda_{\min}(BB^T)}}\log\frac{1}{\varepsilon}\right)$. Also, for \textit{billinear problem with linear composites} \eqref{problem:bilinear_lin_composites} the lower bound on oracle calls of $\nabla p(x), \nabla q(y)$ is $\Omega\left(\log\frac{1}{\varepsilon}\right)$ and the lower bound on calls of $B / B^T$ is $\Omega\left(\sqrt{\frac{\lambda_{\max}(BB^T)}{\lambda_{\min}(BB^T)}}\log\frac{1}{\varepsilon}\right)$.

\subsection{Strongly convex-strongly concave and strongly convex-concave case}\label{discussion:subsec:sc-sc}
For the strongly convex-strongly concave case \Cref{ae:alg} has the following oracle complexity
\begin{equation*}
    \mathcal{O}\left(\max\left\{1, \sqrt{\frac{L_p}{\mu_x}}, \sqrt{\frac{L_q}{\mu_y}}\right\}\log\frac{1}{\varepsilon}\right) ~~ \text{calls of }\,\, \nabla p(x), \nabla q(y)
\end{equation*}
and 
\begin{equation*}
    \mathcal{O}\left(\frac{L_R}{\sqrt{\mu_x \mu_y}}\log \frac{L_R}{\min\{\mu_x, \mu_y\}}\log\frac{1}{\varepsilon}\right) ~~ \text{calls of }\,\, \nabla R(x, y)
\end{equation*} 
to find an $\varepsilon$-solution to \eqref{problem:main}. Also, the iteration complexity of \Cref{ae:alg} is $\mathcal{O}\left(\max\left\{1, \sqrt{\frac{L_p}{\mu_x}}, \sqrt{\frac{L_q}{\mu_y}}, \frac{L_R}{\sqrt{\mu_x \mu_y}}\right\}\log \frac{L_R}{\min\{\mu_x, \mu_y\}}\log\frac{1}{\varepsilon}\right)$. This achieves the lower bounds up to logarithmic factor and improves the results for iteration complexity
\begin{equation*}
    \mathcal{O} \left( \max\left\{\sqrt{\frac{L_p}{\mu_x}}, \sqrt{\frac{L_q}{\mu_y}}, \sqrt{\frac{L_R \max\{L_p, L_q, L_R\}}{\mu_x \mu_y}}\right\} \log^3 \frac{(L_p + L_R)(L_q + L_R)}{\mu_x \mu_y} \log \frac{1}{\varepsilon} \right)
\end{equation*}
from \cite{wang2020}, 
\begin{equation*}
    \mathcal{O} \left( \frac{L_R + \sqrt{L_pL_q}}{\sqrt{\mu_x \mu_y}} \log^3 \frac{1}{\varepsilon} \right)
\end{equation*}
according to  \cite{lin2020},
\begin{equation*}
    \mathcal{O} \left( \frac{L_R + \sqrt{L_pL_q}}{\sqrt{\mu_x \mu_y}} \log \frac{1}{\varepsilon} \right)
\end{equation*}
from \cite{FOAM}
and
\begin{equation*}
    \mathcal{O} \left( \textcolor{black}{\max}\left\{\sqrt{\frac{L_p}{\mu_x}}, \sqrt{\frac{L_q}{\mu_y}}, \frac{L_R}{\mu_x}, \frac{L_R}{\mu_y}  \right\}\log \frac{1}{\varepsilon} \right)
\end{equation*} according to \cite{pmlr-v178-jin22b}. Note, that in works \cite{pmlr-v178-jin22b} considered problem \eqref{problem:main} under Assumptions \ref{ass:p}, \ref{ass:q} and the following assumption on function $R(x, y)$. \begin{assumption}
    $R(x,y)$ is twice differentiable function and $\|\nabla_{xx}R(x,y)\| \leq L^{xx}_R$, $\|\nabla_{yy}R(x,y)\| \leq L^{yy}_R$ and $\|\nabla_{xy}R(x,y)\| \leq L_R^{xy}$, where $\|\cdot\|$ is spectral norm.
\end{assumption}
Using these notation the authors of \cite{pmlr-v178-jin22b} get the following iteration complexity to problem \eqref{problem:main}:
\begin{equation*}
    \mathcal{O} \left( \textcolor{black}{\max}\left\{\sqrt{\frac{L_p}{\mu_x}}, \sqrt{\frac{L_q}{\mu_y}}, \frac{L_R^{xx}}{\mu_x}, \frac{L_R^{yy}}{\mu_y}, \frac{L_R^{xy}}{\sqrt{\mu_x\mu_y}}  \right\}\log \frac{1}{\varepsilon} \right).
\end{equation*}
This iteration complexity achieve the lower bounds if $L_R^{xx} = L_R^{yy} = 0$. 
Meanwhile, \Cref{ae:alg} effectively (achieves the lower bounds) separates the oracle calls for composite functions  $\nabla p(x), \nabla q(y)$ and for saddle part $\nabla R(x,y)$ up to logarithmic factor. 

In work \cite{alkousa2019}, the authors separate the oracle calls for $\nabla p(x)$, $\nabla q(y)$ and $\nabla R(x, y)$ but these bounds not achieves the lower bounds even for iteration complexity. Due to these facts, to the best of our knowledge, \Cref{ae:alg} is the first algorithm that achieves the lower bounds on iteration and separate effectively the oracle calls to \eqref{problem:main}.   For the strongly convex-concave case we get the same results with regularization by changing $\mu_y$ on $\frac{\varepsilon}{\mathcal{D}^2_y}$.

\subsection{Convex-concave case}
For the convex-concave case \Cref{ae:alg} requires
\begin{equation*}
    \mathcal{O}\left(\max\left\{\sqrt{\frac{L_p}{\varepsilon}}\mathcal{D}_x, \sqrt{\frac{L_q}{\varepsilon}}\mathcal{D}_y\right\}\log \frac{1}{\varepsilon}\right)  \ \ \text{calls of} \ \ \nabla p(x), \nabla q(y)
\end{equation*}
 and 
 \begin{equation*}
    \mathcal{O}\left(\max\left\{\sqrt{\frac{L_p}{\varepsilon}}\mathcal{D}_x, \sqrt{\frac{L_q}{\varepsilon}}\mathcal{D}_y, \frac{L_R}{\varepsilon}\mathcal{D}_x \mathcal{D}_y\right\}\log^2 \frac{1}{\varepsilon}\right) \ \ \text{calls of} \ \ \nabla R(x,y)
 \end{equation*}  to find an $\varepsilon$-solution to \eqref{problem:main}. This result achieves the lower bounds on iteration complexity $\Omega\left(\max\left\{\sqrt{\frac{L_p}{\varepsilon}}\mathcal{D}_x, \sqrt{\frac{L_q}{\varepsilon}}\mathcal{D}_y, \frac{L_R}{\varepsilon}\mathcal{D}_x \mathcal{D}_y\right\}\log \frac{1}{\varepsilon}\right)$ up to logarithmic factors and generalizes results  
 \begin{equation*}
    \mathcal{O} \left( \sqrt{\frac{\max\{L_p ,L_q\}}{\varepsilon}}\mathcal{D} \right) \ \ \text{calls of} \ \ \nabla p(x), \nabla q(y) 
 \end{equation*}
 and 
 \begin{equation*}
     \mathcal{O} \left(  \max\left\{\sqrt{\frac{\max\{L_p,L_q\}}{\varepsilon}}\mathcal{D}, \frac{L_R}{\varepsilon}\mathcal{D}^2\right\}\right) \ \ \text{calls of} \ \ \nabla R(x,y)
 \end{equation*}
from \cite{lan2021}, where $\mathcal{D} = \max \left\{\mathcal{D}_x, \mathcal{D}_y\right\}$.

\subsection{Bilinear strongly convex-strongly concave case}
For the bilinear strongly convex-strongly concave case \eqref{problem:bilinear} \Cref{ae:alg} requires 
\begin{equation*}
   \mathcal{O}\left(\max\left\{\sqrt{\frac{L_p}{\mu_p}}, \sqrt{\frac{L_q}{\mu_q}}\right\}\log \frac{1}{\varepsilon}\right) \ \  \text{oracle calls of} \ \  \nabla p(x), \nabla q(y)  
\end{equation*}
and
\begin{equation*}
    \mathcal{O}\left(\max\left\{\sqrt{\frac{L_p}{\mu_p}}, \sqrt{\frac{L_q}{\mu_q}}, \frac{ L_{B}}{\sqrt{\mu_p\mu_q}}\right\}\log\frac{L_B}{\min\{\mu_p, \mu_q\} }\log \frac{1}{\varepsilon}\right) \ \ \text{oracle calls of} \ \  \nabla R(x, y)
\end{equation*}
to find an  $\varepsilon$-solution to \eqref{problem:bilinear}. 
Also, the iteration complexity of \Cref{ae:alg} is $\mathcal{O}\left(\max\left\{\sqrt{\frac{L_p}{\mu_p}}, \sqrt{\frac{L_q}{\mu_q}}, \frac{ L_{B}}{\sqrt{\mu_p\mu_q}}\right\}\log\frac{L_B}{\min\{\mu_p, \mu_q\} }\log \frac{1}{\varepsilon}\right)$. The same results $\mathcal{O}\left(\max\left\{\sqrt{\frac{L_p}{\mu_p}}, \sqrt{\frac{L_q}{\mu_q}}, \frac{ L_{B}}{\sqrt{\mu_p\mu_q}}\right\}\log \frac{1}{\varepsilon}\right)$ on iteration complexity were proposed in works \cite{kovalev2022accelerated}, \cite{pmlr-v151-thekumparampil22a}, \cite{du2022} but the main benefit of our approach is complexity separation.

\subsection{Affinely constrained minimization case}
For the affinely constrained minimization case \eqref{pr:bilinear_aff_constr} \Cref{ae:alg} requires 
\begin{align*}   \mathcal{O}\left(\sqrt{\frac{L_p}{\mu_p}}\log\frac{1}{\varepsilon}\right) \ \  \text{oracle calls of} \ \  \nabla p(x)
\end{align*}
and 
\begin{equation*}
    \mathcal{O}\left(\sqrt{\frac{L_p\lambda_{\max}(BB^T)}{\mu_p\lambda_{\min}(BB^T)}}\log^2\frac{1}{\varepsilon}\right) ~~ \text{calls of }\,\, B \ \text{or} \ \ B^T.
\end{equation*}
This matches the iteration complexity of algorithms from the works \cite{NEURIPS2020_d530d454}, \cite{kovalev2022accelerated} up to logarithmic factor. Note, in these works, the authors achieve the lower bounds \cite{pmlr-v151-salim22a} exactly. But the key idea of \Cref{ae:alg} in separating oracle complexities. 

Meanwhile, we can apply this results to distributed optimization problem \eqref{pr:distributed_dual}. For this problem \Cref{ae:alg} requires $\mathcal{O}\left(\sqrt{\frac{L_F}{\mu_F}}\log\frac{1}{\varepsilon}\right)$ calls of $\nabla F(\mathbf{x})$, i.e. local oracle calls and $\mathcal{O}\left(\sqrt{\frac{L_F \lambda_{\max}(W)}{\mu_F \lambda_{\min}^+(W)}}\log^2\frac{1}{\varepsilon}\right)$ calls of $W$, i.e. communication rounds. \Cref{ae:alg} achieves the lower bounds for distributed optimization \cite{scaman2017optimal} up to logarithmic factor. The optimal method for this problem was proposed in \cite{beznosikov2020decentralized}.

\subsection{Billinear case with linear composites}
For the billinear case with linear composites \eqref{problem:bilinear_lin_composites} \Cref{ae:alg} requires 
\begin{equation*}
    \mathcal{O}\left(\log\frac{1}{\varepsilon}\right) \  \text{oracle calls of } \ \nabla p(x), \nabla q(y)
\end{equation*}
and 
\begin{equation*}
\mathcal{O}\left(\sqrt{\frac{\lambda_{\max}(BB^T)}{\lambda_{\min}(BB^T)}}\log^2\frac{1}{\varepsilon}\right) \  \text{oracle calls of } \ B, B^T.
\end{equation*}
 This results match the iteration complexity from the work \cite{azizian_2020} up to logarithmic factor. In contrast to our results, in work \cite{azizian_2020} the lower bounds \cite{pmlr-v119-ibrahim20a} are achieved.

\newpage
\bibliography{lit}
\bibliographystyle{plain}

\newpage
\appendix

\section{Missing proofs }

\subsection{Notation}
First, we need the following notation. For scalar multiplication with non-Euclidean matrix we use $\langle x, y \rangle_A := x^TAy$. Also, we use the following matrix:
\begin{align*}
   P :=  \begin{pmatrix}
\frac{1}{\eta_x} I & 0 \\
0 & \frac{1}{\eta_y}I
\end{pmatrix}, \ \ P^{-1} :=  \begin{pmatrix}
\eta_x I & 0 \\
0 & \eta_y I
\end{pmatrix}
\end{align*}

\subsection{Proof of \Cref{th:main} }\label{sec:proof_th1}
\begin{lemma}
	 Under Assumptions \ref{ass:p}-\ref{ass:R}, the following inequality holds for \Cref{ae:alg}.
	\begin{align*}
-2\left \langle \begin{pmatrix}
            \nabla p (x_g^{k}) +\nabla_x R(\hat x^{k+1}, \hat y^{k+1}) \\
            \nabla q (y_g^{k}) - \nabla_y R(\hat x^{k+1}, \hat y^{k+1})
            \end{pmatrix}; \begin{pmatrix}
            \hat x^{k+1} - x^*\\
            \hat y^{k+1} - y^*
        \end{pmatrix}\right \rangle \leq&-\frac{2}{\alpha}\left \langle \begin{pmatrix}
            \nabla p(x_g^{k}) - \nabla p(x^*) \\
            \nabla q (y_g^{k}) - \nabla q(y^*)
            \end{pmatrix}; \begin{pmatrix}
             x_f^{k+1} - x_g^k\\
             y_f^{k+1} - y_g^k
            \end{pmatrix}\right \rangle 
            \\&+ \frac{2(1 - \alpha)}{\alpha}(\mathrm{D}_p(x_f^k, x^*) - \mathrm{D}_p(x_g^k, x^*)) 
            \\&+ \frac{2(1 - \alpha)}{\alpha}(\mathrm{D}_q(y_f^k, y^*) - \mathrm{D}_q(y_g^k, y^*))
            \\&-2 \mathrm{D}_p(x_g^{k}; x^*)
            - 2\
            \mathrm{D}_q (y_g^{k};y^*) \\&- \mu_x\|\hat x^{k+1} - x^*\|^2 - \mu_y \|\hat y^{k+1} - y^*\|^2.
\end{align*}
\end{lemma}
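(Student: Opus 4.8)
The plan is to split the inner product on the left into a composite part and a saddle part,
\[
\mathrm{I}:=-2\langle\nabla p(x_g^{k}),\hat x^{k+1}-x^*\rangle-2\langle\nabla q(y_g^{k}),\hat y^{k+1}-y^*\rangle,\quad
\mathrm{II}:=-2\langle\nabla_x R(\hat x^{k+1},\hat y^{k+1}),\hat x^{k+1}-x^*\rangle-2\langle-\nabla_y R(\hat x^{k+1},\hat y^{k+1}),\hat y^{k+1}-y^*\rangle,
\]
and to bound $\mathrm{I}$ and $\mathrm{II}$ separately, tracking the residual first-order and function-value terms so that they telescope or cancel against the claimed right-hand side.

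\textbf{Saddle part.} I would apply $\mu_x$-strong convexity of $R(\cdot,\hat y^{k+1})$ with comparison point $x^*$ and $\mu_y$-strong concavity of $R(\hat x^{k+1},\cdot)$ with comparison point $y^*$ (\Cref{ass:R}). Writing both as first-order inequalities, multiplying by $-2$ and adding, the two copies of $R(\hat x^{k+1},\hat y^{k+1})$ cancel and I obtain
\[
\mathrm{II}\le 2R(x^*,\hat y^{k+1})-2R(\hat x^{k+1},y^*)-\mu_x\|\hat x^{k+1}-x^*\|^2-\mu_y\|\hat y^{k+1}-y^*\|^2,
\]
which already supplies the two norm terms in the statement; the residual $R$-value differences are carried to the final step.

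\textbf{Composite part.} Here I would use the momentum identities from \Cref{ae:line:1,ae:line:4}. The relation $x_f^{k+1}-x_g^k=\alpha(\hat x^{k+1}-x^k)$ turns $\hat x^{k+1}-x^k$ into $\tfrac1\alpha(x_f^{k+1}-x_g^k)$ and is what produces the term $-\tfrac{2}{\alpha}\langle\nabla p(x_g^k)-\nabla p(x^*),x_f^{k+1}-x_g^k\rangle$ together with its $q$-analogue. The relation $x_g^k=\alpha x^k+(1-\alpha)x_f^k$, i.e. $x^k-x^*=\tfrac1\alpha(x_g^k-x^*)-\tfrac{1-\alpha}{\alpha}(x_f^k-x^*)$, combined with the three-point convexity inequality for $\mathrm{D}_p$, produces the telescoping factor $\tfrac{2(1-\alpha)}{\alpha}\bigl(\mathrm{D}_p(x_f^k,x^*)-\mathrm{D}_p(x_g^k,x^*)\bigr)$ and the term $-2\,\mathrm{D}_p(x_g^k,x^*)$; the same computation in $y$ gives the $q$-terms. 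Since each Bregman divergence $\mathrm{D}_p(\cdot,x^*)=p(\cdot)-p(x^*)-\langle\nabla p(x^*),\cdot-x^*\rangle$ absorbs the linear contributions in $\nabla p(x^*)$, the only genuinely residual quantities left over from $\mathrm{I}$ are the value differences $-2(p(\hat x^{k+1})-p(x^*))-2(q(\hat y^{k+1})-q(y^*))$.

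\textbf{Combination (the hard part).} These residual value differences from $\mathrm{I}$ combine with $2R(x^*,\hat y^{k+1})-2R(\hat x^{k+1},y^*)$ from $\mathrm{II}$ to reassemble the full objective $\Phi:=p+R-q$ at the crossed points, yielding $2\bigl(\Phi(x^*,\hat y^{k+1})-\Phi(\hat x^{k+1},y^*)\bigr)$. Because $(x^*,y^*)$ is a saddle point of $\Phi$ we have $\Phi(x^*,\hat y^{k+1})\le\Phi(x^*,y^*)\le\Phi(\hat x^{k+1},y^*)$, so this combination is $\le 0$ and may be dropped, leaving exactly the claimed right-hand side. I expect the main obstacle to lie in the bookkeeping of the composite step: getting the momentum algebra to yield \emph{precisely} the telescoping Bregman terms and the $-2\,\mathrm{D}_p,-2\,\mathrm{D}_q$ terms, while checking that every stray first-order term is absorbed into a Bregman divergence and that the leftover value differences assemble into the $\Phi$-gap killed by the saddle-point inequality.
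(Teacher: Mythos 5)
Your decomposition into a composite part $\mathrm{I}$ and a saddle part $\mathrm{II}$ is sound, your bound on $\mathrm{II}$ is correct, and your momentum algebra for $\mathrm{I}$ (using $x_f^{k+1}-x_g^k=\alpha(\hat x^{k+1}-x^k)$ and $x^k-x_g^k=\tfrac{1-\alpha}{\alpha}(x_g^k-x_f^k)$ together with three-point convexity) does produce the terms $-\tfrac{2}{\alpha}\langle\nabla p(x_g^k)-\nabla p(x^*),x_f^{k+1}-x_g^k\rangle$, $\tfrac{2(1-\alpha)}{\alpha}\bigl(\mathrm{D}_p(x_f^k,x^*)-\mathrm{D}_p(x_g^k,x^*)\bigr)$ and $-2\mathrm{D}_p(x_g^k,x^*)$. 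The gap is in your bookkeeping of the residual. Tracking the computation exactly, what is left over from $\mathrm{I}$ is the \emph{linear} term
\begin{equation*}
-2\langle\nabla p(x^*),\hat x^{k+1}-x^*\rangle-2\langle\nabla q(y^*),\hat y^{k+1}-y^*\rangle,
\end{equation*}
not the value differences: by definition of the Bregman divergence,
\begin{equation*}
-2\langle\nabla p(x^*),\hat x^{k+1}-x^*\rangle=-2\bigl(p(\hat x^{k+1})-p(x^*)\bigr)+2\mathrm{D}_p(\hat x^{k+1},x^*),
\end{equation*}
so replacing the linear term by $-2\bigl(p(\hat x^{k+1})-p(x^*)\bigr)$ silently discards $2\mathrm{D}_p(\hat x^{k+1},x^*)+2\mathrm{D}_q(\hat y^{k+1},y^*)\ge 0$, which is an inequality in the wrong direction (it would require $\mathrm{D}_p(\hat x^{k+1},x^*)\le 0$). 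Consequently your combination step fails as stated: the true leftover is $2\bigl(\Phi(x^*,\hat y^{k+1})-\Phi(\hat x^{k+1},y^*)\bigr)+2\mathrm{D}_p(\hat x^{k+1},x^*)+2\mathrm{D}_q(\hat y^{k+1},y^*)$, and the plain saddle inequality $\Phi(x^*,\hat y^{k+1})\le\Phi(\hat x^{k+1},y^*)$ is not strong enough to absorb the two extra nonnegative Bregman terms.

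The argument is repairable, but you need more than the saddle-point property. Expanding $\Phi$ around $(x^*,y^*)$ and using the optimality conditions $\nabla p(x^*)+\nabla_x R(x^*,y^*)=0$ and $\nabla q(y^*)-\nabla_y R(x^*,y^*)=0$ gives the strengthened gap bounds $\Phi(\hat x^{k+1},y^*)-\Phi(x^*,y^*)\ge \mathrm{D}_p(\hat x^{k+1},x^*)+\tfrac{\mu_x}{2}\|\hat x^{k+1}-x^*\|^2$ and $\Phi(x^*,y^*)-\Phi(x^*,\hat y^{k+1})\ge \mathrm{D}_q(\hat y^{k+1},y^*)+\tfrac{\mu_y}{2}\|\hat y^{k+1}-y^*\|^2$, which do cancel the stray Bregman terms (and even regenerate the $\mu$-norm terms). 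Equivalently, and more simply, keep the residual in linear form, substitute the optimality conditions into it, and bound the resulting differences of $R$-gradients by strong monotonicity of the operator $(\nabla_x R,-\nabla_y R)$; this is exactly what the paper does, except that it performs this insertion at the very first step of the proof and therefore never has to manipulate function values of $R$, $p$, $q$ or $\Phi$ at all.
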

\begin{proof}
Using the first-order necessary condition $ \begin{pmatrix}
             \nabla p(x^*) \\
             \nabla q(y^*)
            \end{pmatrix} + \begin{pmatrix}
             \nabla_x R(x^*, y^*) \\
             - \nabla_y R(x^*, y^*)
            \end{pmatrix} = 0 $, $\mu_x$-strong convexity in $x$ and $\mu_y$-strong concavity in $y$ of $R(x,y)$,  we get
\begin{align*}
    -2&\left \langle \begin{pmatrix}
            \nabla p (x_g^{k}) +\nabla_x R(\hat x^{k+1}, \hat y^{k+1}) \\
            \nabla q (y_g^{k}) - \nabla_y R(\hat x^{k+1}, \hat y^{k+1})
            \end{pmatrix}; \begin{pmatrix}
            \hat x^{k+1} - x^*\\
            \hat y^{k+1} - y^*
        \end{pmatrix}\right \rangle =
        \\&=-2\left \langle \begin{pmatrix}
            \nabla p(x_g^{k}) +\nabla_x R(\hat x^{k+1}, \hat y^{k+1}) - \nabla p(x^*) - \nabla_x R(x^*, y^*) \\
            \nabla q (y_g^{k}) - \nabla_y R(\hat x^{k+1}, \hat y^{k+1}) - \nabla q(y^*) + \nabla_y R(x^*, y^*)
            \end{pmatrix} ; \begin{pmatrix}
            \hat x^{k+1} - x^*\\
            \hat y^{k+1} - y^*
        \end{pmatrix}\right \rangle
        \\&\leq -2\left \langle \begin{pmatrix}
            \nabla p(x_g^{k}) - \nabla p(x^*) \\
            \nabla q(y_g^{k}) - \nabla q(y^*)
            \end{pmatrix}; \begin{pmatrix}
            \hat x^{k+1} - x^*\\
            \hat y^{k+1} - y^*
            \end{pmatrix}\right \rangle - \mu_x\|\hat x^{k+1} - x^*\|^2 - \mu_y \|\hat y^{k+1} - y^*\|^2
        \\&= -2\left \langle \begin{pmatrix}
            \nabla p(x_g^{k}) - \nabla p(x^*) \\
            \nabla q(y_g^{k}) - \nabla q(y^*)
            \end{pmatrix}; \begin{pmatrix}
            \hat x^{k+1} - x^k\\
            \hat y^{k+1} - y^k
            \end{pmatrix}\right \rangle -2\left \langle \begin{pmatrix}
            \nabla p(x_g^{k}) - \nabla p(x^*) \\
            \nabla q(y_g^{k}) - \nabla q(y^*)
            \end{pmatrix}; \begin{pmatrix}
             x^{k} - x_g^k\\
             y^{k} - y_g^k
            \end{pmatrix}\right \rangle 
            \\&-2\left \langle \begin{pmatrix}
            \nabla p(x_g^{k}) - \nabla p(x^*) \\
            \nabla q(y_g^{k}) - \nabla q(y^*)
            \end{pmatrix}; \begin{pmatrix}
             x_g^{k} - x^*\\
             y_g^{k} - y^*
            \end{pmatrix}\right \rangle - \mu_x\|\hat x^{k+1} - x^*\|^2 - \mu_y \|\hat y^{k+1} - y^*\|^2.
\end{align*}
Using convexity of $p(x)$ and   $q(y)$, we get
\begin{align*}
-2\left \langle \begin{pmatrix}
            \nabla p (x_g^{k}) +\nabla_x R(\hat x^{k+1}, \hat y^{k+1}) \\
            \nabla q (y_g^{k}) - \nabla_y R(\hat x^{k+1}, \hat y^{k+1})
            \end{pmatrix}; \begin{pmatrix}
            \hat x^{k+1} - x^*\\
            \hat y^{k+1} - y^*
        \end{pmatrix}\right \rangle \leq&-2\left \langle \begin{pmatrix}
            \nabla p(x_g^{k}) - \nabla p(x^*) \\
            \nabla q(y_g^{k}) - \nabla q(y^*)
            \end{pmatrix}; \begin{pmatrix}
            \hat x^{k+1} - x^k\\
            \hat y^{k+1} - y^k
            \end{pmatrix}\right \rangle \\&- 2\left \langle \begin{pmatrix}
            \nabla p(x_g^{k}) - \nabla p(x^*) \\
            \nabla q(y_g^{k}) - \nabla q(y^*)
            \end{pmatrix}; \begin{pmatrix}
            x^{k} - x_g^k\\
            y^{k} - y_g^k
            \end{pmatrix}\right \rangle 
            \\&-2 \mathrm{D}_p(x_g^{k}; x^*)
            - 2\
            \mathrm{D}_q(y_g^{k};y^*) 
            \\&- \mu_x\|\hat x^{k+1} - x^*\|^2 - \mu_y \|\hat y^{k+1} - y^*\|^2.
\end{align*}

Now, we use \cref{ae:line:1} and \cref{ae:line:4} of \Cref{ae:alg} and get

\begin{align*}
-2\left \langle \begin{pmatrix}
            \nabla p (x_g^{k}) +\nabla_x R(\hat x^{k+1}, \hat y^{k+1}) \\
            \nabla q (y_g^{k}) - \nabla_y R(\hat x^{k+1}, \hat y^{k+1})
            \end{pmatrix}; \begin{pmatrix}
            \hat x^{k+1} - x^*\\
            \hat y^{k+1} - y^*
        \end{pmatrix}\right \rangle \leq&-\frac{2}{\alpha}\left \langle \begin{pmatrix}
            \nabla p(x_g^{k}) - \nabla p(x^*) \\
            \nabla q (y_g^{k}) - \nabla q(y^*)
            \end{pmatrix}; \begin{pmatrix}
             x_f^{k+1} - x_g^k\\
             y_f^{k+1} - y_g^k
            \end{pmatrix}\right \rangle 
            \\&+ \frac{2(1 - \alpha)}{\alpha}\left \langle \begin{pmatrix}
            \nabla p(x_g^{k}) - \nabla p(x^*) \\
            \nabla q (y_g^{k}) - \nabla q(y^*)
            \end{pmatrix}; \begin{pmatrix}
            x_f^{k} - x_g^k\\
            y_f^{k} - y_g^k
            \end{pmatrix}\right \rangle 
            \\&-2 \mathrm{D}_p(x_g^{k}; x^*)
            - 2\
            \mathrm{D}_q (y_g^{k};y^*) \\&- \mu_x\|\hat x^{k+1} - x^*\|^2 - \mu_y \|\hat y^{k+1} - y^*\|^2
            \\=&-\frac{2}{\alpha}\left \langle \begin{pmatrix}
            \nabla p(x_g^{k}) - \nabla p(x^*) \\
            \nabla q (y_g^{k}) - \nabla q(y^*)
            \end{pmatrix}; \begin{pmatrix}
             x_f^{k+1} - x_g^k\\
             y_f^{k+1} - y_g^k
            \end{pmatrix}\right \rangle 
            \\&+ \frac{2(1 - \alpha)}{\alpha}(\mathrm{D}_p(x_f^k, x^*) - \mathrm{D}_p(x_g^k, x^*)) 
            \\&+ \frac{2(1 - \alpha)}{\alpha}(\mathrm{D}_q(y_f^k, y^*) - \mathrm{D}_q(y_g^k, y^*))
            \\&-2 \mathrm{D}_p(x_g^{k}; x^*)
            - 2\mathrm{D}_q (y_g^{k};y^*) \\& - \mu_x\|\hat x^{k+1} - x^*\|^2 - \mu_y \|\hat y^{k+1} - y^*\|^2.
\end{align*}
This completes the proof of Lemma.
\end{proof}
\begin{assumption}\label{ass:cnd_nmb}
    $\frac{L_p}{\mu_x} \geq \frac{L_q}{\mu_y}$
\end{assumption}
\begin{lemma}\label{lemma:scl_prod}
	Consider  \Cref{ae:alg} for Problem \ref{problem:main} under Assumptions \ref{ass:p}-\ref{ass:cnd_nmb}, with the following tuning: 
\begin{equation}\label{ae:choice}
    \alpha = \min\left\{1,\sqrt{\frac{\mu_x}{L_p}}\right\}, \quad  \eta_x = \min \left\{\frac{1}{3\mu_x},\frac{1}{3 L_p \alpha}\right\}, \quad \eta_y = \frac{\mu_x}{\mu_y}\eta_x,
	\end{equation}
and let $\hat x^{k+1}$   in \cref{ae:line:2} satisfy    
	\begin{equation}\label{app:aux:grad_app}
	\eta_x\| 
            \nabla_x A_{\eta}^k (\hat x^{k+1}, \hat y^{k+1})\|^2  +
            \eta_y\| \nabla_y A_{\eta}^k(\hat x^{k+1}, \hat y^{k+1})\|^2 \leq \frac{1}{6\eta_x}\|\hat x^{k+1} - x^k\|^2 + \frac{1}{6\eta_y}\|\hat y^{k+1} - y^k\|^2.
	\end{equation}
Then, the following inequality holds:
	\begin{equation}\label{ae:rec}
		\mathrm{\Psi}^{k+1} \leq \left(1 - \frac{\alpha}{3}\right)\mathrm{\Psi}^k,
	\end{equation}
	where  
	\begin{equation}\label{ae:Psi}
		\mathrm{\Psi}^k := \frac{1}{\eta_x}\|x^k - x^*\|^2 + \frac{1}{\eta_y}\|y^k - y^*\|^2 + \frac{2}{\alpha}\mathrm{D}_p(x_f^k, x^*) + \frac{2}{\alpha}\mathrm{D}_q(y_f^k, y^*).
	\end{equation}
\end{lemma}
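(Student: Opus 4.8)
The plan is to prove the one-step contraction \eqref{ae:rec} by a potential-function argument built on $\Psi^k$, using the previous lemma as the engine for the cross term. First I would open the squared-distance part of $\Psi^{k+1}$ through the update in \cref{ae:line:3}. The cleanest device is to observe that \cref{ae:line:3} together with the definition \eqref{problem:aux} yields the exact identities $x^{k+1}=\hat x^{k+1}-\eta_x\nabla_x A_\eta^k(\hat x^{k+1},\hat y^{k+1})$ and $y^{k+1}=\hat y^{k+1}+\eta_y\nabla_y A_\eta^k(\hat x^{k+1},\hat y^{k+1})$, so that after expanding and splitting the cross term around $\hat z^{k+1}$ one gets the identity
\begin{align*}
\frac{1}{\eta_x}\|x^{k+1}-x^*\|^2 + \frac{1}{\eta_y}\|y^{k+1}-y^*\|^2 =& \frac{1}{\eta_x}\|x^k-x^*\|^2 + \frac{1}{\eta_y}\|y^k-y^*\|^2 \\&- 2\left\langle\begin{pmatrix}\nabla p(x_g^k)+\nabla_x R(\hat x^{k+1},\hat y^{k+1})\\ \nabla q(y_g^k)-\nabla_y R(\hat x^{k+1},\hat y^{k+1})\end{pmatrix};\begin{pmatrix}\hat x^{k+1}-x^*\\ \hat y^{k+1}-y^*\end{pmatrix}\right\rangle \\&- \frac{1}{\eta_x}\|\hat x^{k+1}-x^k\|^2 - \frac{1}{\eta_y}\|\hat y^{k+1}-y^k\|^2 \\&+ \eta_x\|\nabla_x A_\eta^k(\hat x^{k+1},\hat y^{k+1})\|^2 + \eta_y\|\nabla_y A_\eta^k(\hat x^{k+1},\hat y^{k+1})\|^2,
\end{align*}
whose first inner product is exactly the quantity bounded by the previous lemma.

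Second, I would bound the two new Bregman terms using $L_p$- and $L_q$-smoothness (Assumptions \ref{ass:p}, \ref{ass:q}) and the extrapolation \cref{ae:line:4}, namely $\frac{2}{\alpha}\mathrm{D}_p(x_f^{k+1},x^*)\le \frac{2}{\alpha}\mathrm{D}_p(x_g^k,x^*)+\frac{2}{\alpha}\langle\nabla p(x_g^k)-\nabla p(x^*),x_f^{k+1}-x_g^k\rangle+L_p\alpha\|\hat x^{k+1}-x^k\|^2$ and symmetrically for $q$. Adding these to the bound from the previous lemma, I expect the two cancellations that are the heart of the telescoping: the composite-gradient inner products $\frac{2}{\alpha}\langle\nabla p(x_g^k)-\nabla p(x^*),x_f^{k+1}-x_g^k\rangle$ (and its $q$-analogue) cancel, and the coefficients of $\mathrm{D}_p(x_g^k,x^*)$ and $\mathrm{D}_q(y_g^k,y^*)$ sum to zero. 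What survives of the Bregman part is $\frac{2(1-\alpha)}{\alpha}\mathrm{D}_p(x_f^k,x^*)+\frac{2(1-\alpha)}{\alpha}\mathrm{D}_q(y_f^k,y^*)$, which already carries the required factor since $1-\alpha\le 1-\tfrac{\alpha}{3}$.

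Third, I would dispose of the quadratic remainder. The stopping criterion \eqref{app:aux:grad_app} lets me absorb $\eta_x\|\nabla_x A_\eta^k\|^2+\eta_y\|\nabla_y A_\eta^k\|^2$ into $\frac{1}{6\eta_x}\|\hat x^{k+1}-x^k\|^2+\frac{1}{6\eta_y}\|\hat y^{k+1}-y^k\|^2$, while the tuning \eqref{ae:choice} gives $L_p\alpha\le\frac{1}{3\eta_x}$ and, crucially through $\eta_y=\frac{\mu_x}{\mu_y}\eta_x$ together with Assumption \ref{ass:cnd_nmb}, also $L_q\alpha\le\frac{1}{3\eta_y}$. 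Hence the net coefficients of $\|\hat x^{k+1}-x^k\|^2$ and $\|\hat y^{k+1}-y^k\|^2$ become at most $-\frac{1}{2\eta_x}$ and $-\frac{1}{2\eta_y}$, so these terms can only help.

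Finally, the contraction. At this point the bound reads as $\frac{1}{\eta_x}\|x^k-x^*\|^2+\frac{1}{\eta_y}\|y^k-y^*\|^2+\frac{2(1-\alpha)}{\alpha}[\mathrm{D}_p(x_f^k,x^*)+\mathrm{D}_q(y_f^k,y^*)]$ minus the strong-convexity terms $\mu_x\|\hat x^{k+1}-x^*\|^2+\mu_y\|\hat y^{k+1}-y^*\|^2$ from Assumption \ref{ass:R} and minus the quadratic budget $\frac{1}{2\eta_x}\|\hat x^{k+1}-x^k\|^2+\frac{1}{2\eta_y}\|\hat y^{k+1}-y^k\|^2$. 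The key algebraic fact, to be read off from \eqref{ae:choice}, is $\mu_x=\frac{\alpha}{3\eta_x}$ and $\mu_y=\frac{\alpha}{3\eta_y}$; combining the strong-convexity gain, the quadratic budget, and a polarization identity relating $x^k-x^*$, $\hat x^{k+1}-x^*$ and $\hat x^{k+1}-x^k$ should then show these terms dominate $\frac{\alpha}{3}\big(\frac{1}{\eta_x}\|x^k-x^*\|^2+\frac{1}{\eta_y}\|y^k-y^*\|^2\big)$, turning the coefficient $1$ on the distance terms into $1-\frac{\alpha}{3}$ and giving \eqref{ae:rec}. I expect this last step to be the main obstacle: the squared distance re-enters with coefficient one after the update expansion, whereas the strong-convexity term is anchored at $\hat x^{k+1}$ rather than at $x^k$, so the contraction cannot be read off coordinate-wise. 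The whole purpose of the choices in \eqref{ae:choice} is to calibrate $\mu_x$, $\eta_x$, $\alpha$ (and, via $\eta_y$, their $y$-counterparts) so that the strong-convexity gain, the prox budget surviving the inexactness, and the Bregman margin between $1-\alpha$ and $1-\frac{\alpha}{3}$ exactly cover this re-entrant term; getting the constants to line up, and checking that the boundary case $\alpha=1$ is handled by the same bookkeeping, is where the care is needed.
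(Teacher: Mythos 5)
Your proposal is correct and follows essentially the same route as the paper's proof: your prox-form identities $x^{k+1}=\hat x^{k+1}-\eta_x\nabla_x A_\eta^k(\hat x^{k+1},\hat y^{k+1})$, $y^{k+1}=\hat y^{k+1}+\eta_y\nabla_y A_\eta^k(\hat x^{k+1},\hat y^{k+1})$ reproduce exactly the paper's $P$-norm expansion with the $\nabla A_\eta^k$ residual, and the subsequent steps (cross-term lemma, Bregman/smoothness cancellations via \cref{ae:line:4}, absorption of \eqref{app:aux:grad_app}, and the bounds $L_p\alpha\le\frac{1}{3\eta_x}$, $L_q\alpha\le\frac{1}{3\eta_y}$ from \Cref{ass:cnd_nmb}) coincide with the paper's. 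The final calibration you flag as the main obstacle is handled in the paper exactly as you anticipate — via $\mu_x\eta_x=\mu_y\eta_y=\frac{\alpha}{3}$ and the polarization bound $\|a\|^2+\|b\|^2\ge\frac{1}{2}\|a-b\|^2$ with $a=\hat x^{k+1}-x^*$, $b=\hat x^{k+1}-x^k$ — and your caution is well founded: the paper silently drops the factor $\frac{1}{2}$ at that point (writing $-\mu_x\|x^k-x^*\|^2$ where the inequality only yields $-\frac{\mu_x}{2}\|x^k-x^*\|^2$), so a fully rigorous execution of this argument, yours or the paper's, gives a contraction of the form $1-\frac{\alpha}{6}$ rather than $1-\frac{\alpha}{3}$, which changes only the absolute constant in $K$ and nothing in the asymptotic complexity of \Cref{th:main}.
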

\begin{proof}
Using \cref{ae:line:3} of \Cref{ae:alg}, we get
	\begin{align*}
		\left\|\begin{matrix}
            x^{k+1} - x^*\\
            y^{k+1} - y^*
            \end{matrix}\right\|_P^2 
		=&
		\left\|\begin{matrix}
            x^{k} - x^*\\
            y^{k} - y^*
            \end{matrix}\right\|_P^2
		+ 2\left \langle \begin{pmatrix}
            x^{k+1} - x^k\\
            y^{k+1} - y^k
            \end{pmatrix}; \begin{pmatrix}
            x^{k} - x^*\\
            y^{k} - y^*
            \end{pmatrix}\right \rangle_P
		+\left\|\begin{matrix}
            x^{k+1} - x^k\\
            y^{k+1} - y^k
            \end{matrix}\right\|_P^2
		\\=&
		\left\|\begin{matrix}
            x^{k} - x^*\\
            y^{k} - y^*
            \end{matrix}\right\|_P^2
		-2\left \langle \begin{pmatrix}
            \nabla p (x_g^{k}) +\nabla_x R(\hat x^{k+1}, \hat y^{k+1}) \\
            \nabla q (y_g^{k}) - \nabla_y R(\hat x^{k+1}, \hat y^{k+1})
            \end{pmatrix}; \begin{pmatrix}
            x^{k} - x^*\\
            y^{k} - y^*
            \end{pmatrix}\right \rangle
		+\left\|\begin{matrix}
            x^{k+1} - x^k\\
            y^{k+1} - y^k
            \end{matrix}\right\|_P^2
            \\=&
		\left\|\begin{matrix}
            x^{k} - x^*\\
            y^{k} - y^*
            \end{matrix}\right\|_P^2
		+2\left \langle \begin{pmatrix}
            \nabla p (x_g^{k}) +\nabla_x R(\hat x^{k+1}, \hat y^{k+1}) \\
            \nabla q (y_g^{k}) - \nabla_y R(\hat x^{k+1}, \hat y^{k+1})
            \end{pmatrix}; \begin{pmatrix}
              \frac{\hat x^{k+1} - x^{k}}{\eta_x}\\
             \frac{\hat y^{k+1} -y^{k}}{\eta_y} 
            \end{pmatrix}\right \rangle_{P^{-1}}
        \\&-2\left \langle 
            \begin{pmatrix}
            \nabla p(x_g^k) + \nabla_x R(\hat x^{k+1}, \hat y^{k+1}) \\
            \nabla q (y_g^{k}) - \nabla_y R(\hat x^{k+1}, \hat y^{k+1})
            \end{pmatrix}; \begin{pmatrix}
             \hat x^{k+1} - x^*\\
             \hat y^{k+1} - y^*
        \end{pmatrix}\right \rangle
		+\left\|\begin{matrix}
            x^{k+1} - x^k\\
            y^{k+1} - y^k
            \end{matrix}\right\|_P^2.
    \end{align*}
Since $2\langle a, b\rangle = \|a+b\|^2 - \|a\|^2 - \|b\|^2$, we get
    \begin{align*}
        \left\|\begin{matrix}
            x^{k+1} - x^*\\
            y^{k+1} - y^*
            \end{matrix}\right\|_P^2 
		=&
		\left\|\begin{matrix}
            x^{k} - x^*\\
            y^{k} - y^*
            \end{matrix}\right\|_P^2 + \left \| \begin{matrix}
            \nabla p (x_g^{k}) +\nabla_x R(\hat x^{k+1}, \hat y^{k+1}) + \frac{\hat x^{k+1} - x^{k}}{\eta_x} \\
            \nabla q (y_g^{k}) - \nabla_y R(\hat x^{k+1}, \hat y^{k+1}) + \frac{\hat y^{k+1} - y^{k}}{\eta_y}
            \end{matrix}\right\|_{P^{-1}}^2
		\\&-\left \| \begin{matrix}
            \nabla p (x_g^{k}) +\nabla_x R(\hat x^{k+1}, \hat y^{k+1}) \\
            \nabla q (y_g^{k}) - \nabla_y R(\hat x^{k+1}, \hat y^{k+1})
            \end{matrix}\right\|_{P^{-1}}^2 -\left\|\begin{matrix}
              \hat x^{k+1} - x^{k}\\
             \hat y^{k+1} -y^{k} 
            \end{matrix}\right \|^2_{P}
        \\&-2\left \langle \begin{pmatrix}
            \nabla p (x_g^{k}) +\nabla_x R(\hat x^{k+1}, \hat y^{k+1}) \\
            \nabla q (y_g^{k}) - \nabla_y R(\hat x^{k+1}, \hat y^{k+1})
            \end{pmatrix}; \begin{pmatrix}
             \hat x^{k+1} - x^*\\
             \hat y^{k+1} - y^*
        \end{pmatrix}\right \rangle
		+\left\|\begin{matrix}
            x^{k+1} - x^k\\
            y^{k+1} - y^k
            \end{matrix}\right\|_P^2.
    \end{align*}
Using \cref{ae:line:3} of \Cref{ae:alg}, we get
    \begin{align*}
        \left\|\begin{matrix}
            x^{k+1} - x^*\\
            y^{k+1} - y^*
            \end{matrix}\right\|_P^2 
		=&
		\left\|\begin{matrix}
            x^{k} - x^*\\
            y^{k} - y^*
            \end{matrix}\right\|_P^2 + \left \| \begin{matrix}
            \nabla p (x_g^{k}) +\nabla_x R(\hat x^{k+1}, \hat y^{k+1}) + \frac{\hat x^{k+1} - x^{k}}{\eta_x} \\
            \nabla q (y_g^{k}) - \nabla_y R(\hat x^{k+1}, \hat y^{k+1}) + \frac{\hat y^{k+1} - y^{k}}{\eta_y}
            \end{matrix}\right\|_{P^{-1}}^2
		-\left \| \begin{matrix}
            x^{k+1} - x^k \\
            y^{k+1} - y^k
            \end{matrix}\right\|_{P}^2 -\left\|\begin{matrix}
              \hat x^{k+1} - x^{k}\\
             \hat y^{k+1} -y^{k} 
            \end{matrix}\right \|^2_{P}
        \\&-2\left \langle \begin{pmatrix}
            \nabla p (x_g^{k}) +\nabla_x R(\hat x^{k+1}, \hat y^{k+1}) \\
            \nabla q (y_g^{k}) - \nabla_y R(\hat x^{k+1}, \hat y^{k+1})
            \end{pmatrix}; \begin{pmatrix}
            \hat x^{k+1} - x^*\\
            \hat y^{k+1} - y^*
        \end{pmatrix}\right \rangle
		+\left\|\begin{matrix}
            x^{k+1} - x^k\\
            y^{k+1} - y^k
            \end{matrix}\right\|_P^2
            \\ = &
		\left\|\begin{matrix}
            x^{k} - x^*\\
            y^{k} - y^*
            \end{matrix}\right\|_P^2 + 2 \left \| \begin{matrix}
            \nabla_x A_{\eta}^k (\hat x^{k+1}, \hat y^{k+1})  \\
            - \nabla_y A_{\eta}^k(\hat x^{k+1}, \hat y^{k+1}) 
            \end{matrix}\right\|_{P^{-1}}^2
		 -\left\|\begin{matrix}
              \hat x^{k+1} - x^{k}\\
             \hat y^{k+1} -y^{k} 
            \end{matrix}\right \|^2_{P}
        \\&-2\left \langle \begin{pmatrix}
            \nabla p (x_g^{k}) +\nabla_x R(\hat x^{k+1}, \hat y^{k+1}) \\
            \nabla q (y_g^{k}) - \nabla_y R(\hat x^{k+1}, \hat y^{k+1})
            \end{pmatrix}; \begin{pmatrix}
             \hat x^{k+1} - x^*\\
             \hat y^{k+1} - y^*
        \end{pmatrix}\right \rangle.
    \end{align*} 

Using \Cref{lemma:scl_prod} and \cref{ae:line:4} of \Cref{ae:alg} , we get
\begin{align*}
    \left\|\begin{matrix}
            x^{k+1} - x^*\\
            y^{k+1} - y^*
            \end{matrix}\right\|_P^2 
		\leq&\left\|\begin{matrix}
            x^{k} - x^*\\
            y^{k} - y^*
            \end{matrix}\right\|_P^2 + 2 \left \| \begin{matrix}
            \nabla_x A_{\eta}^k (\hat x^{k+1}, \hat y^{k+1})  \\
            - \nabla_y A_{\eta}^k(\hat x^{k+1}, \hat y^{k+1}) 
            \end{matrix}\right\|_{P^{-1}}^2-\frac{2}{3}\left\|\begin{matrix}
              \hat x^{k+1} - x^{k}\\
             \hat y^{k+1} -y^{k} 
            \end{matrix}\right \|^2_{P}
            \\&-\frac{2}{\alpha}\left( \left\langle 
            \nabla p(x_g^{k}) - \nabla p(x^*) ; 
             x_f^{k+1} - x_g^k \right\rangle + \frac{1}{6 \alpha\eta_x}\|x_f^{k+1} 
            - x_g^k \|^2\right)
            \\&- \frac{2}{\alpha}\left( \left\langle
            \nabla q (y_g^{k}) - \nabla q(y^*); 
             y_f^{k+1} - y_g^k
             \right\rangle + \frac{1}{6\alpha\eta_y} \|y_f^{k+1} - y_g^k \|^2\right)  
            \\&+ \frac{2(1 - \alpha)}{\alpha}(\mathrm{D}_p(x_f^k, x^*) - \mathrm{D}_p(x_g^k, x^*)) + \frac{2(1 - \alpha)}{\alpha}(\mathrm{D}_q(y_f^k, y^*) - \mathrm{D}_q(y_g^k, y^*))
            \\&-2 \mathrm{D}_p(x_g^{k}; x^*)
            - 2\
            \mathrm{D}_q (y_g^{k};y^*) - \mu_x\|\hat x^{k+1} - x^*\|^2 - \mu_y \|\hat y^{k+1} - y^*\|^2.
\end{align*}
Since $\eta_x \leq \frac{1}{3L_p\alpha} $, $\eta_y \leq \frac{\mu_x}{\mu_y} \frac{1}{3L_p\alpha} \leq \frac{1}{3L_q\alpha} $ (by \eqref{ae:choice} and \Cref{ass:cnd_nmb})
\begin{align*}
    \left\|\begin{matrix}
            x^{k+1} - x^*\\
            y^{k+1} - y^*
            \end{matrix}\right\|_P^2 
		\leq&\left\|\begin{matrix}
            x^{k} - x^*\\
            y^{k} - y^*
            \end{matrix}\right\|_P^2 + 2 \left \| \begin{matrix}
            \nabla_x A_{\eta}^k (\hat x^{k+1}, \hat y^{k+1})  \\
            - \nabla_y A_{\eta}^k(\hat x^{k+1}, \hat y^{k+1}) 
            \end{matrix}\right\|_{P^{-1}}^2
		 -\frac{2}{3}\left\|\begin{matrix}
              \hat x^{k+1} - x^{k}\\
             \hat y^{k+1} -y^{k} 
            \end{matrix}\right \|^2_{P}
            \\&-\frac{2}{\alpha}\left( \left\langle 
            \nabla p(x_g^{k}) - \nabla p(x^*) ; 
             x_f^{k+1} - x_g^k \right\rangle + \frac{L_p}{2}\|x_f^{k+1} 
            - x_g^k \|^2\right)
            \\&- \frac{2}{\alpha}\left( \left\langle
            \nabla q (y_g^{k}) - \nabla q(y^*); 
             y_f^{k+1} - y_g^k
             \right\rangle + \frac{L_q}{2} \|y_f^{k+1} - y_g^k \|^2\right) 
            \\&+ \frac{2(1 - \alpha)}{\alpha}(\mathrm{D}_p(x_f^k, x^*) - \mathrm{D}_p(x_g^k, x^*)) + \frac{2(1 - \alpha)}{\alpha}(\mathrm{D}_q(y_f^k, y^*) - \mathrm{D}_q(y_g^k, y^*))
            \\&-2 \mathrm{D}_p(x_g^{k}; x^*)
            - 2\
            \mathrm{D}_q (y_g^{k};y^*) - \mu_x\|\hat x^{k+1} - x^*\|^2 - \mu_y \|\hat y^{k+1} - y^*\|^2.
\end{align*}
 $L_p$-smoothness of $p(x)$ and $L_q$-smoothness of $q(y)$ gives
\begin{align*}
    \left\|\begin{matrix}
            x^{k+1} - x^*\\
            y^{k+1} - y^*
            \end{matrix}\right\|_P^2 
		\leq&\left\|\begin{matrix}
            x^{k} - x^*\\
            y^{k} - y^*
            \end{matrix}\right\|_P^2 + 2 \left \| \begin{matrix}
            \nabla_x A_{\eta}^k (\hat x^{k+1}, \hat y^{k+1})  \\
            - \nabla_y A_{\eta}^k(\hat x^{k+1}, \hat y^{k+1}) 
            \end{matrix}\right\|_{P^{-1}}^2
		 -\frac{2}{3}\left\|\begin{matrix}
              \hat x^{k+1} - x^{k}\\
             \hat y^{k+1} -y^{k} 
            \end{matrix}\right \|^2_{P}
            \\&-\frac{2}{\alpha}(\mathrm{D}_p(x_f^{k+1}, x^*) - \mathrm{D}_p(x_g^k, x^*)) - \frac{2}{\alpha}(\mathrm{D}_q(y_f^{k+1}, y^*) - \mathrm{D}_q(y_g^k, y^*)) 
            \\&+ \frac{2(1 - \alpha)}{\alpha}(\mathrm{D}_p(x_f^k, x^*) - \mathrm{D}_p(x_g^k, x^*)) + \frac{2(1 - \alpha)}{\alpha}(\mathrm{D}_q(y_f^k, y^*) - \mathrm{D}_q(y_g^k, y^*))
            \\&-2 \mathrm{D}_p(x_g^{k}; x^*)
            - 2\
            \mathrm{D}_q (y_g^{k};y^*) - \mu_x\|\hat x^{k+1} - x^*\|^2 - \mu_y \|\hat y^{k+1} - y^*\|^2
        \\=&\left\|\begin{matrix}
            x^{k} - x^*\\
            y^{k} - y^*
            \end{matrix}\right\|_P^2 + 2\left(  \eta_x\| 
            \nabla_x A_{\eta}^k (\hat x^{k+1}, \hat y^{k+1})\|^2  +
            \eta_y\| \nabla_y A_{\eta}^k(\hat x^{k+1}, \hat y^{k+1})\|^2\right.
            \\&\left.- \frac{1}{6\eta_x}\|\hat x^{k+1} - x^k\|^2 - \frac{1}{6\eta_y}\|\hat y^{k+1} - y^k\|^2\right)
            -\frac{2}{\alpha}\mathrm{D}_p(x_f^{k+1}, x^*) - \frac{2}{\alpha}\mathrm{D}_q(y_f^{k+1}, y^*) 
            \\&+ \frac{2(1 - \alpha)}{\alpha}\mathrm{D}_p(x_f^k, x^*) + \frac{2(1 - \alpha)}{\alpha}\mathrm{D}_q(y_f^k, y^*)
            \\& - \mu_x\left(\|\hat x^{k+1} - x^*\|^2 + \frac{1}{3\mu_x\eta_x}\|\hat x^{k+1} - x^k\|^2\right) - \mu_y \left(\|\hat y^{k+1} - y^*\|^2 + \frac{1}{3\eta_y\mu_y}\|\hat y^{k+1} - y^k\|^2\right).
\end{align*}

Since $\eta_x \leq \frac{1}{3\mu_x}$, $\eta_y = \frac{\mu_x}{\mu_y} \eta_x \leq \frac{1}{3\mu_y}$ (by \eqref{ae:choice}). Using inequality $-\|a - b\|^2 \geq - 2\|a\|^2 - 2\|b\|^2$, we get
\begin{align*}
    \left\|\begin{matrix}
            x^{k+1} - x^*\\
            y^{k+1} - y^*
            \end{matrix}\right\|_P^2 
		\leq&\left\|\begin{matrix}
            x^{k} - x^*\\
            y^{k} - y^*
            \end{matrix}\right\|_P^2 + 2\left(  \eta_x\| 
            \nabla_x A_{\eta}^k (\hat x^{k+1}, \hat y^{k+1})\|^2  +
            \eta_y\| \nabla_y A_{\eta}^k(\hat x^{k+1}, \hat y^{k+1})\|^2 - \frac{1}{6\eta_x}\|\hat x^{k+1} - x^k\|^2 \right.
            \\&\left.- \frac{1}{6\eta_y}\|\hat y^{k+1} - y^k\|^2\right)
            -\frac{2}{\alpha}\mathrm{D}_p(x_f^{k+1}, x^*) - \frac{2}{\alpha}\mathrm{D}_q(y_f^{k+1}, y^*)  + \frac{2(1 - \alpha)}{\alpha}\mathrm{D}_p(x_f^k, x^*) 
            \\&+ \frac{2(1 - \alpha)}{\alpha}\mathrm{D}_q(y_f^k, y^*) - \mu_x\| x^{k} - x^*\|^2  - \mu_y \| y^{k} - y^*\|^2.
\end{align*}

Since \eqref{app:aux:grad_app}, we get 
\begin{align*}
    \frac{1}{\eta_x} \|x^{k+1} - x^*\|^2 +& \frac{1}{\eta_y}
            \|y^{k+1} - y^*\|^2
		+\frac{2}{\alpha}\mathrm{D}_p(x_f^{k+1}, x^*) + \frac{2}{\alpha}\mathrm{D}_q(y_f^{k+1}, y^*) \leq
  \\\leq&\frac{1}{\eta_x}\left(1 - \mu_x\eta_x\right)\|x^{k} - x^*\|^2 + \frac{1}{\eta_y}\left(1 - \mu_y \eta_y\right)\|y^{k} - y^*\|^2
 \\&+ \frac{2(1 - \alpha)}{\alpha}\mathrm{D}_p(x_f^k, x^*) + \frac{2(1 - \alpha)}{\alpha}\mathrm{D}_q(y_f^k, y^*)
 \\\leq&(1 - \alpha)\left[\frac{1}{\eta_x}\|x^{k} - x^*\|^2 + \frac{1}{\eta_y}\|y^{k} - y^*\|^2
 + \frac{2}{\alpha}\mathrm{D}_p(x_f^k, x^*) + \frac{2}{\alpha}\mathrm{D}_q(y_f^k, y^*)\right].
\end{align*}
In the last inequality we use that $\alpha > \frac{\alpha}{3}$, $\eta_x \mu_x \geq \frac{\alpha}{3}$ and $\eta_y\mu_y \geq \frac{\alpha}{3}$. If $L_p \leq \mu_x$, then $\alpha = 1$, $\eta_x \mu_x = \frac{1}{3} = \frac{\alpha}{3}$, $\eta_y \mu_y = \frac{1}{3} = \frac{\alpha}{3}$. If $L_p > \mu_x$, then $\alpha = \sqrt{\frac{\mu_x}{L_p}}$, $\eta_x \mu_x = \sqrt{\frac{\mu_x}{3L_p}} \geq \frac{\alpha}{3}$, $\eta_y \mu_y = \sqrt{\frac{\mu_x}{3L_p}} \geq \frac{\alpha}{3}$.

By \eqref{ae:Psi} definition of $\mathrm{\Psi}^k$, we get
\begin{align*}
    \mathrm{\Psi}^{k+1} \leq \left(1 - \frac{\alpha}{3}\right) \mathrm{\Psi}^k.
\end{align*}
\end{proof}

\textbf{Proof of \Cref{th:main}}
Using the property of the Bregman devergence $\mathrm{D}_f(x, y) \geq 0 $ and running the recursion \eqref{ae:rec} we get
\begin{equation*}
    \frac{1}{\eta_x}\|x^K - x^*\|^2 + \frac{1}{\eta_y}\|y^K - y^*\|^2 \leq \mathrm{\Psi}^K \leq C \left(1 - \frac{\alpha}{3}\right)^K,
\end{equation*}
where $C$ is defined as
\begin{equation*}
    C = \frac{1}{\eta_x}\|x^0 - x^*\|^2 + \frac{1}{\eta_y}\|y^0 - y^*\|^2 + \frac{2}{\alpha}\mathrm{D}_p(x_f^0, x^*) + \frac{2}{\alpha}\mathrm{D}_q(y_f^0, y^*).
\end{equation*}
After $K = \frac{3}{\alpha} \log \frac{1}{\varepsilon}$ iterations of \Cref{ae:alg} we get a pair $(x^K, y^K)$ satisfies the following inequality
\begin{equation*}
    \frac{1}{\eta_x}\|x^K - x^*\|^2 + \frac{1}{\eta_y}\|y^K - y^*\|^2 \leq \varepsilon.
\end{equation*}

\subsection{Proof of \Cref{th:inner_complexity}}\label{sec:proof_th2}

\begin{lemma}\label{lemma:var_uv}
    Consider function $R(x, y)$ under \Cref{ass:R}. If we make the following replacing variables $x = \alpha u$, $y = \beta v$, then function $\tilde R(u, v) := R(x, y)$ is $\tilde{L}$-smooth, $\mu_u$-strongly convex in $u$ with fixed $v$ and $\mu_v$-strongly concave in $v$ for fixed $u$, with $\tilde L = \max\{\alpha^2, \beta^2\} L$, $\mu_u = \alpha^2 \mu_x$, $\mu_v = \beta^2\mu_y $
\end{lemma}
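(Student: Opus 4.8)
The plan is to work directly from the definitions of smoothness, strong convexity, and strong concavity, transporting each inequality for $R$ through the linear change of variables via the chain rule. It is convenient to introduce the diagonal scaling operator $D := \begin{pmatrix} \alpha I & 0 \\ 0 & \beta I \end{pmatrix}$, so that the substitution reads $(x^\top, y^\top)^\top = D (u^\top, v^\top)^\top$ and $\tilde R(u,v) = R\big(D(u,v)\big)$. The chain rule then gives the compact identity $\nabla \tilde R(u,v) = D\,\nabla R\big(D(u,v)\big)$, i.e. componentwise $\nabla_u \tilde R = \alpha\,\nabla_x R$ and $\nabla_v \tilde R = \beta\,\nabla_y R$ (each evaluated at the rescaled point). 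Everything else follows from this identity together with $\|D\| = \max\{|\alpha|,|\beta|\}$.

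For the smoothness bound I would take two points $z_1 = (u_1,v_1)$, $z_2 = (u_2,v_2)$ and factor the scaling matrix out twice:
\begin{align*}
\|\nabla \tilde R(z_1) - \nabla \tilde R(z_2)\|
&= \big\|D\big(\nabla R(Dz_1) - \nabla R(Dz_2)\big)\big\| \\
&\leq \|D\|\cdot L\,\|Dz_1 - Dz_2\|
\leq \|D\|^2\, L\,\|z_1 - z_2\|,
\end{align*}
where the first inequality uses the $L$-smoothness of $R$ (the constant called $L_R$ in \Cref{ass:R}) and $\|D\|=\max\{|\alpha|,|\beta|\}$ supplies $\|D\|^2 = \max\{\alpha^2,\beta^2\}$, giving exactly $\tilde L = \max\{\alpha^2,\beta^2\}L$.

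For $\mu_u$-strong convexity in $u$ I would fix $v$, set $y = \beta v$, and write out the $\mu_x$-strong convexity of $R(\cdot,y)$ at the points $x_i = \alpha u_i$:
\begin{equation*}
R(\alpha u_1, \beta v) \geq R(\alpha u_2, \beta v) + \langle \nabla_x R(\alpha u_2,\beta v), \alpha u_1 - \alpha u_2\rangle + \frac{\mu_x}{2}\|\alpha u_1 - \alpha u_2\|^2.
\end{equation*}
Recognizing $\alpha\,\nabla_x R(\alpha u_2,\beta v) = \nabla_u \tilde R(u_2,v)$ in the inner product and $\|\alpha u_1 - \alpha u_2\|^2 = \alpha^2\|u_1-u_2\|^2$ in the quadratic term converts this into the defining inequality for $\alpha^2\mu_x$-strong convexity of $\tilde R(\cdot,v)$; the strong concavity claim $\mu_v = \beta^2\mu_y$ is entirely symmetric, obtained by applying $\mu_y$-strong concavity of $R(x,\cdot)$ at $y_i=\beta v_i$. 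There is no real obstacle here — the only point demanding a little care is keeping the scaling factors matched between the gradient term (a single power of $\alpha$, absorbed into $\nabla_u\tilde R$) and the quadratic term (two powers of $\alpha$), so that the emitted constant is $\alpha^2\mu_x$ rather than $\alpha\mu_x$; the operator-norm computation $\|D\|=\max\{|\alpha|,|\beta|\}$ for the diagonal $D$ is immediate.
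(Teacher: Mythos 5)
Your proof is correct and follows essentially the same route as the paper's: the chain-rule identity $\nabla_u \tilde R = \alpha\,\nabla_x R$, $\nabla_v \tilde R = \beta\,\nabla_y R$, a double factoring of the scaling to obtain $\tilde L = \max\{\alpha^2,\beta^2\}L$, and direct transport of the strong convexity/concavity inequalities. The only cosmetic difference is that you package the scaling as the operator norm of the diagonal matrix $D$, whereas the paper carries out the same estimate componentwise with squared norms.
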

\begin{proof}
Firstly, let us consider that 
\begin{align*}
    \nabla_x R(x, y) = \begin{pmatrix} \frac{\partial R(x, y)}{\partial x_1} 
    \\ \dots
    \\ \frac{\partial R(x, y)}{\partial x_{d_x}}
    \end{pmatrix} = \begin{pmatrix} \frac{\partial R(\alpha u, y)}{\partial u_1} \cdot \frac{\partial u_1}{\partial x_1}
    \\ \dots
    \\ \frac{\partial R(\alpha u, y)}{\partial u_{d_x}} \cdot \frac{\partial u_{d_x}}{\partial x_{d_x}}
    \end{pmatrix} = \begin{pmatrix} \frac{\partial R(\alpha u, y)}{\partial u_1} \cdot \frac{1}{\alpha}
    \\ \dots
    \\ \frac{\partial R(\alpha u, y)}{\partial u_{d_x}} \cdot \frac{1}{\alpha}
    \end{pmatrix}  = \frac{1}{\alpha} \nabla_u \tilde R(u, v).
\end{align*}
Using the analogical calculations we get $\nabla_y R(x, y) = \frac{1}{\beta} \nabla_v\tilde R (u, v)$.
Now we define the smoothness constant of function $\tilde R(u, v)$ using $L$-smoothness of function $R(x, y)$. 
\begin{align*}
    \|\nabla \tilde R(u_1, v_1) - \nabla \tilde R(u_2, v_2)\|^2 &= \|\nabla_u \tilde R(u_1, v_1) - \nabla_u \tilde R(u_2, v_2)\|^2 + \|\nabla_v \tilde R(u_1, v_1) - \nabla_v \tilde R(u_2, v_2)\|^2
    \\& = \alpha^2\|\nabla_x R(x_1, y_1) - \nabla_x R(x_2, y_2)\|^2 + \beta^2\|\nabla_y R(x_1, y_1) - \nabla_y R(x_2, y_2)\|^2 
    \\& \leq \max\{\alpha^2, \beta^2\} \|\nabla R(x_1, y_1) - \nabla R(x_2, y_2)\|^2 \\&\leq  \max\{\alpha^2, \beta^2\} L^2 \left(\|x_1 - x_2\|^2 + \|y_1 - y_2\|^2\right)
    \\&=  \max\{\alpha^2, \beta^2\} L^2 \left(\alpha^2\|u_1 - u_2\|^2 + \beta^2\|v_1 - v_2\|^2\right) 
    \\&\leq \tilde{L}^2 \left(\|u_1 - u_2\|^2 + \|v_1 - v_2\|^2\right),
\end{align*} 
with $\tilde L =  \max\{\alpha^2, \beta^2\} L$. 

Now we define $\mu
_u$-strongly convex constant of function $\tilde R(u, v)$ in $u$ for fixed $v$.
\begin{align*}
    \tilde R(u_2, v) = R( x_2, y) &\geq R( x_1, y) + \langle \nabla_x R(x_1 ,y),  x_2 - x_1\rangle + \frac{\mu_x}{2}\|x_2 - x_1\|^2
    \\& = \tilde R( u_1, v) + \left\langle \frac{1}{\alpha} \nabla_u \tilde R(u_1 ,v), \alpha (u_2 - u_1) \right\rangle + \frac{\mu_x\alpha^2}{2}\|u_2 - u_1\|^2
    \\& = \tilde R( u_1, v) + \left \langle \nabla_u \tilde R(u_1 ,v), u_2 - u_1  \right \rangle + \frac{\mu_u}{2}\|u_2 - u_1\|^2,
\end{align*}
with $\mu_u = \alpha^2 \mu_x$. In this equation we use $\mu_x$-strong convexity of $R(x, y)$ in $x$ for fixed $y$ and differentiation rule of complex function. Similarly we get $\mu_v$-strong concavity of $\tilde R(u, v)$ in $v$ for fixed $u$, with $\mu_v = \beta^2 \mu_y$. 
\end{proof}

\textbf{Proof of \Cref{th:inner_complexity}}
Firstly, we make the following replacing variables $x = \alpha u$, $y = \beta v$ in the problem \eqref{problem:aux}. After that we get the following problem in new variables:
\begin{equation}\label{pr:aux_uv}
    \min_u \max_v \alpha\langle\nabla p(x_g^k),u \rangle + \frac{\alpha^2}{2\eta_x}\left\|u - \frac{x^k}{\alpha}\right\|^2 + \tilde R(u, v) - \beta\langle\nabla q(y_g^k), v\rangle - \frac{\beta^2}{2\eta_y}\left\|v - \frac{y^k}{\beta}\right\|^2. 
\end{equation}
By Corollary 1 from \cite{FOAM}  Algorithm FOAM (Algorithm 4 from \cite{FOAM}) requires the following number of gradient evaluations: 
\begin{equation}
    T = \mathcal{O}\left(\left(\frac{\tilde L_R + \frac{\alpha^2}{\eta_x} + \frac{\beta^2}{\eta_y}}{\sqrt{(\mu_u + \frac{\alpha^2}{\eta_x})(\mu_v + \frac{\beta^2}{\eta_y})}}\right)\log \frac{1}{\gamma}\right)
\end{equation}
to find an $\gamma$-accurate solution of problem \eqref{pr:aux_uv}. By \Cref{lemma:var_uv} we get $\tilde L_R = \max\{\alpha, \beta\} L_R$, $\mu_u = \alpha^2 \mu_x$ and $\mu_v = \beta^2\mu_y$. Using these values we get the following number of gradient evaluations: 
\begin{equation}
    T = \mathcal{O}\left(\left(\frac{\max\{\alpha^2, \beta^2\} L_R + \frac{\alpha^2}{\eta_x} + \frac{\beta^2}{\eta_y}}{\sqrt{(\alpha^2\mu_x + \frac{\alpha^2}{\eta_x})(\beta^2\mu_y + \frac{\beta^2}{\eta_y})}}\right)\log \frac{1}{\gamma}\right). 
\end{equation}
Now we are ready to define constants $\alpha, \beta$. For case $\eta_x > \eta_y$ we define $\alpha^2 = \sqrt{\frac{\eta_x}{\eta_y}}$, $\beta = 1$. For another case ($\eta_x \leq \eta_y$) we define $\alpha = 1$, $\beta^2 = \sqrt{\frac{\eta_y}{\eta_x}}$. We provide proof only for case $\eta_x > \eta_y$ due to case  $\eta_x \leq \eta_y$ is symmetric.
\begin{align*}
    T = \mathcal{O}\left(\left(\frac{\sqrt{\frac{\eta_x}{\eta_y}} L_R + \frac{2}{\eta_y}}{\sqrt{\left(\sqrt{\frac{\eta_x}{\eta_y}}\mu_x + \frac{1}{\eta_y}\right)\left(\mu_y + \frac{1}{\eta_y}\right)}}\right)\log \frac{1}{\gamma}\right) &\leq \mathcal{O}\left(\left(\frac{\sqrt{\frac{\eta_x}{\eta_y}} L_R + \frac{2}{\eta_y}}{ \frac{1}{\eta_y}}\right)\log \frac{1}{\gamma}\right)
    \\&= \mathcal{O}\left(\left(\sqrt{\eta_x\eta_y} L_R + 1\right)\log \frac{1}{\gamma}\right). 
\end{align*}

\end{document}